\newcommand{\assign}{:=}
\newcommand{\cdummy}{\cdot}
\newcommand{\infixand}{\text{ and }}
\newcommand{\mathLaplace}{\Delta}
\newcommand{\mathcatalan}{C}
\newcommand{\mathd}{\mathrm{d}}
\newcommand{\nobracket}{}
\newcommand{\tmaffiliation}[1]{\\ #1}
\newcommand{\tmem}[1]{{\em #1\/}}
\newcommand{\tmop}[1]{\ensuremath{\operatorname{#1}}}
\newenvironment{enumerateroman}{\begin{enumerate}[i.] }{\end{enumerate}}
\newtheorem{theorem}{Theorem}[section]
\newtheorem*{theorem*}{Theorem}
\newtheorem{corollary}[theorem]{Corollary}
\newtheorem{lemma}[theorem]{Lemma}
\newtheorem{proposition}[theorem]{Proposition}
\newtheorem{remark}[theorem]{Remark}
\newtheorem{definition}[theorem]{Definition}
\begin{document}

\title{Coming up from $- \infty$ for KPZ via stochastic control }

\author{
  Nicolas Perkowski, Carlos Villanueva Mariz
  \tmaffiliation{FU Berlin}
}

\maketitle

\begin{abstract}
  We derive a lower bound, independent of the initial condition, for the
  solution of the KPZ equation on the torus through its representation as the
  value function of a (conditional) stochastic control problem. With the same
  techniques, we also prove a bound for its oscillation, again independent of
  initial conditions, from which a Harnack type inequality for the rough heat
  equation (on the torus) can be obtained.
\end{abstract}

\section{Introduction}

The Kardar-Parisi-Zhang (KPZ) equation
\[ \left( \partial_t - \frac{1}{2} \Delta \right) h (t, x) = \frac{1}{2} |
   \partial_x h (t, x) |^2 + \xi (t, x), \quad (t, x) \in [0, T] \times
   (\mathbb{R} / \mathbb{Z}), \]
where $\xi$ is a space-time white noise, is a prototypical singular stochastic
partial differential equation (SPDE) that is solved within the theory of
regularity structures~{\cite{Hairer2014}} or paracontrolled
distributions~{\cite{Gubinelli2015Paracontrolled}}. It is a universal model
for fluctuations in ``weakly asymmetric'' interface growth
models~{\cite{Hairer2018Class,Goncalves2014}} and a central model within the
KPZ universality class~{\cite{Quastel2015,Matetski2021}}.

The equation is singular because solutions only have the regularity of a
Brownian motion, and therefore the nonlinearity $| \partial_x h (t, x) |^2$ is
only well-defined after renormalization, considering formally $| \partial_x h
(t, x) |^2 - \infty$, and as a space-time distribution. The equation can be
formally related to the the stochastic heat equation via $w = e^h$ and
\[ \partial_t w = \frac{1}{2} \Delta w + w \xi, \]
if the product on the right hand side is interpreted in the It{\^o} sense
{\cite{Bertini1997}}. The first intrinsic formulation is due to
Hairer~{\cite{Hairer2013KPZ}} based on rough paths, and later there were also
intrinsic formulations based on regularity structures~{\cite{Friz2020}} and
paracontrolled distributions~{\cite{KPZreloaded}}.

By default, these pathwise theories only give solutions up to a possibly
finite explosion time, and also that only on compact space domains. But in
fact~{\cite{Hairer2013KPZ}} already contained an almost sure global existence
result, which was achieved by relating $h$ to the solution $w$ of the
stochastic heat equation, and using almost sure global existence for the
stochastic heat equation. However, unlike the rest of the theory, this step is
not pathwise and the null set where explosion occurs may depend on the initial
condition.

A pathwise non-explosion result was derived in~{\cite{KPZreloaded}} via a
stochastic control formulation which leads to a uniform bound, in combination
with a pathwise implementation of the Cole-Hopf transform, which turns the
uniform bound into a bound in H{\"o}lder spaces. This approach was extended
in~{\cite{Perkowski2019}} to treat the equation on $\mathbb{R}$ instead of on
the torus $\mathbb{T} = \mathbb{R} / \mathbb{Z}$, but it still relied on the
Cole-Hopf transform. The work~{\cite{zhangHJB}} treats a more general
equation, where the Cole-Hopf transform is no longer available, and derives
pathwise bounds for weighted uniform and H{\"o}lder norms by a priori
estimates in weighted spaces, and using a suitable transformation (Zvonkin
transform).

All the bounds mentioned above are ``linear'' in the sense that the estimate
for $h$ at a positive time depends on the initial condition $h (0)$. The
purpose of this work is to derive truly nonlinear bounds for $h$ that use the
nonlinearity as a coercive term which yields better bounds than those
available for the linear equation. There are several results of this type
available for singular SPDEs, starting from the influential work of
Mourrat-Weber~{\cite{comingdownweber}} on coming down from infinity for the
$\Phi^4_3$ equation, see for example
also~{\cite{Gubinelli2019Global,Chandra2023,Duch2023}}, where the solution
$\phi$ to a $\Phi^4$ type equation can be bounded at time $t > 0$ by a
constant that is independent of the initial condition. All these works rely on
the very strong coercivity provided by the term $- \phi^3$ in the $\Phi^4$ and
related equations.

To the best of our knowledge, there are no comparable results for the KPZ
equation in the literature. Note that KPZ preserves a constant shift, meaning
that if $h$ solves the KPZ equation with initial condition $h_0$, then $h + c$
solves the KPZ equation with initial condition $h_0 + c$, for any $c \in
\mathbb{R}$. Therefore, the formulation of a coming down from infinity result
is a bit more subtle than for the $\Phi^4$ equation. But we can take
inspiration from the deterministic equation: If $h$ is the viscosity solution
of the Hamilton-Jacobi equation
\[ \partial_t h (t, x) = \frac{1}{2} | \partial_x h (t, x) |^2, \quad h (0, x)
   = \bar{h} (x), \]
then by the Hopf-Lax formula {\cite{Evans2010}}, Section 10.3.4,
\[ h (t, x) = \max_{y \in \mathbb{R}} \left\{ \bar{h} (y) - \frac{t}{2} \left(
   \frac{x - y}{t} \right)^2 \right\}, \]
we have
\[ h (t, x) \geqslant - \frac{1}{2 t} x^2 \]
whenever $\bar{h} (0) \geqslant 0$. Moreover, we have the following one-sided
estimate for the second derivative of $h$,
\[ h (t, x + \varepsilon) + h (t, x - \varepsilon) - 2 h (t, x) \leqslant
   \frac{\varepsilon^2}{t}, \]
see~{\cite{Cannarsa2004}}, Theorem~1.6.1, which does not depend on $h_0$ and
which is also related to Oleinik's one-sided estimate for the derivative of
Burgers equation, see~{\cite{Dafermos2010}}, Theorem~11.2.1.

In our setting, even a one-sided bound on the second derivative of the KPZ
equation with space-time white noise is of course out of the question, because
$h$ only has Brownian regularity. Instead, here we will derive a lower bound
that is similar in spirit to the one from the Hopf-Lax formula, and a bound on
the H{\"o}lder norm of the derivative of $h (t)$ which only depends on $t$ and
$\xi$, but not on $h_0$. More precisely, our main results are:

\begin{theorem*}
  Let $\beta > 0$ and $\bar{h} \in \mathcal{C }^{\beta} (\mathbb{T})$ and let
  $h : [0, T] \times \mathbb{T}$ solve the KPZ equation
  \[ \left( \partial_t - \frac{1}{2} \Delta \right) h (t, x) = \frac{1}{2} |
     \partial_x h (t, x) |^2 + \xi (t, x), \quad h (0, x) = \bar{h} (x) . \]
  \begin{enumerateroman}
    \item For almost all realizations of $\xi$ and all $T > 0$ there exists a
    constant $C \equiv C (T, \xi)$ such that whenever $\bar{h} |_{[-
    \varepsilon, \varepsilon]} \geqslant 0 \nobracket$, we have
    \begin{equation}
      h (t, x) \geqslant \log \varepsilon - C \left( 1 + \frac{1}{t} \right),
      \quad  (t, x) \in (0, T] \times \mathbb{T} . 
    \end{equation}
    \item For almost all realizations of $\xi$ and all $T > 0$ and $\alpha \in
    \left( 0, \frac{1}{2} \right)$ there exists a constant $C \equiv C (t, T,
    \alpha, \xi)$ such that
    \begin{equation}
      | h (t, x) - h (t, 0) | \leqslant C | x |^{\alpha}, \quad (t, x) \in (0,
      T] \times \mathbb{T},
    \end{equation}
    independently of the initial condition, where $\sup_{t \in [\varepsilon,
    T]} C (t, T, \alpha, \xi) < \infty$for any $\varepsilon > 0$.
  \end{enumerateroman}
\end{theorem*}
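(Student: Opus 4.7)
The plan is to exploit the Cole--Hopf/Feynman--Kac representation
\[
  e^{h(t,x)} = \mathbb E\bigl[\exp\bigl(\bar h(x+B_t) + Y_t(x)\bigr)\bigr],
\]
where $B$ is a Brownian motion on $\mathbb T$ starting at $0$ and $Y_t(x) = \int_0^t \xi(t-s, x+B_s)\,\mathrm d s$ is the renormalized noise integral along the Brownian path, made rigorous in the stochastic control setting of~\cite{KPZreloaded}. Equivalently, the Bou\'e--Dupuis variational formula gives $h(t,x) = \sup_u \mathbb E[\bar h(x+B^u_t) + Y_t(x;B^u) - \tfrac12 \int_0^t u_s^2\,\mathrm d s]$.

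\textbf{Part (i).} The assumption $\bar h \ge 0$ on $[-\varepsilon,\varepsilon]$ suggests restricting the expectation to the event $A_\varepsilon := \{x+B_t \in [-\varepsilon,\varepsilon]\}$, on which $\bar h(x+B_t) \ge 0$; Jensen's inequality in logarithmic form then yields
\[
  h(t,x) \ge \log \mathbb P(A_\varepsilon) + \mathbb E[Y_t(x) \mid A_\varepsilon].
\]
This is precisely the ``conditional stochastic control'' lower bound promised by the abstract: using the law of $B$ conditioned on $A_\varepsilon$ as the control incurs relative entropy $-\log \mathbb P(A_\varepsilon)$ against Wiener measure and removes the $\bar h$ contribution. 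Periodic Gaussian heat-kernel bounds yield $\log \mathbb P(A_\varepsilon) \ge \log \varepsilon - C(1 + 1/t)$ uniformly in $x\in\mathbb T$, and it remains to establish the uniform-in-$(x,\varepsilon)$ estimate $|\mathbb E[Y_t(x) \mid A_\varepsilon]| \le C(T,\xi)$. I would obtain this by disintegrating the conditional law over Brownian bridges ending in $[-\varepsilon,\varepsilon]$ and invoking uniform (Nelson-type) a priori bounds for the bridge noise integral, adapted from~\cite{KPZreloaded}.

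\textbf{Part (ii).} For the H\"older bound independent of $\bar h$ I would work with $w = e^h$ and use the Brownian-bridge disintegration
\[
  w(t,x) = \int_\mathbb T \bar w(z)\, p_t(x,z)\, \mathbb E^{x \to z}\bigl[e^{Y_t}\bigr]\,\mathrm d z,
\]
with $p_t$ the torus heat kernel and $\mathbb E^{x \to z}$ the expectation under the length-$t$ Brownian bridge. Then $w(t,x)/w(t,y)$ is a weighted average (over $z$, with positive weights $\bar w(z) p_t(y,z) \mathbb E^{y \to z}[e^{Y_t}]$) of the pointwise ratios
\[
  \varrho_z(x,y) \;=\; \frac{p_t(x,z)}{p_t(y,z)} \cdot \frac{\mathbb E^{x \to z}[e^{Y_t}]}{\mathbb E^{y \to z}[e^{Y_t}]},
\]
and the problem reduces to the uniform-in-$z$ bound $|\varrho_z(x,y) - 1| \le C(t,T,\alpha,\xi)\,|x-y|^\alpha$. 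The heat-kernel factor is $1 + O(|x-y|/t)$ on the torus. For the bridge factor I would parametrize each bridge as $\varphi_{xz}(s) + B^0_s$, with $\varphi_{xz}(s) = x + (z-x)s/t$ the linear interpolation and $B^0$ a standard Brownian bridge from $0$ to $0$, so that the ratio is a reweighted expectation of $\exp(\Xi_{xy}(z))$ with
\[
  \Xi_{xy}(z) := \int_0^t \bigl[\xi(t-s,\varphi_{xz}(s) + B^0_s) - \xi(t-s,\varphi_{yz}(s) + B^0_s)\bigr]\,\mathrm d s.
\]
The key technical lemma is then an almost-sure uniform H\"older estimate $|\Xi_{xy}(z)| \le C(t,T,\alpha,\xi)\,|x-y|^\alpha$ in $z$ and in $B^0$, which combined with the heat-kernel factor gives $|h(t,x) - h(t,y)| \le C|x-y|^\alpha$.

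\textbf{The main obstacle} in both parts is the uniform control of the renormalized noise integral $Y_t$: under conditioning on the narrow target $A_\varepsilon$ in part~(i), and under a vanishing linear spatial translation of the underlying path in part~(ii). Both estimates reduce, in the paracontrolled framework, to regularity and moment bounds on the enhanced noise objects that resolve the renormalization, and I expect them to follow by an elaboration of the stochastic control estimates of~\cite{KPZreloaded}.
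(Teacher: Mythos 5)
Your proposal shares the paper's high-level idea (Feynman--Kac / stochastic-control representation, conditioning on a target window, Gaussian heat-kernel bounds), but it diverges at the decisive point: you push the entire singularity of the noise into the \emph{potential}, by writing $e^{h(t,x)} = \mathbb{E}[\exp(\bar h(x+B_t) + Y_t(x))]$ with $Y_t(x)$ the renormalized noise functional of a genuine Brownian path, whereas the paper pushes it into the \emph{drift}. Concretely, the paper works with the remainder $h^R = h - Y - Y^{\vee} - Y^R$ (and, for part~(ii), further subtracts $\hat Y$ to make the running cost vanish), so that in its variational formula (Theorem~\ref{varirep}) the process $\gamma$ is a singular diffusion with distributional drift $b = \partial_x(Y + Y^\vee + Y^R)$, while every term inside the expectation --- $\bar h$, $Y(0,\cdot)$, $|\partial_x Y^R|^2$ --- is a \emph{bounded} function of the path. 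Consequently, after choosing the conditional measure $\mathbb{Q} = \mathbb{P}_{t,x}(\cdot\,|\,\gamma_t\in[-\varepsilon,\varepsilon])$, the only nontrivial object left is the relative entropy $-\log \mathbb{P}_{t,x}(\gamma_t\in[-\varepsilon,\varepsilon])$, which is handled by proving Gaussian two-sided heat-kernel and gradient estimates for $\gamma$ via a paracontrolled Zvonkin transform (Theorem~\ref{zvonkintrans}, Corollary~\ref{singkernelcorol}). The paper also has to rederive a Bou\'e--Dupuis-type identity (Lemma~\ref{singdupuistheorem}) because $\gamma$ is not a functional of the Brownian motion; this requires the Dirichlet-process decomposition and a martingale-representation result (Proposition~\ref{martingalerep}), neither of which appears in your outline.

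The genuine gap in your plan is in the noise-integral estimates you list as ``the main obstacle''. In part~(i) you need $|\mathbb{E}[Y_t(x)\mid A_\varepsilon]| \le C(T,\xi)$ uniformly in $\varepsilon$ and $x$, and in part~(ii) you need an \emph{almost-sure, uniform in $z$ and in $B^0$} H\"older bound on the renormalized bridge increment $\Xi_{xy}(z)$. The second of these, as stated, is not achievable: for a fixed realization of $\xi\in C_T\mathcal{C}^{-3/2-}$ the map $B^0 \mapsto \int_0^t \xi(t-s, \varphi(s)+B^0_s)\,\mathrm d s$ (renormalized) is at best conditionally Gaussian with unbounded tails in $B^0$, so no a.s.\ pathwise supremum over all bridges is available, and the increment you need has the same difficulty. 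The first estimate is not obviously false but is also not something established in~\cite{KPZreloaded}; nothing there gives uniform conditional-on-$A_\varepsilon$ control of the renormalized Brownian additive functional, and such a bound would essentially reprove the heat-kernel estimates for the singular diffusion from scratch. The paper's change of variables is not a cosmetic choice: by moving the noise into the drift, these uniform potential estimates become unnecessary --- the stochastic objects that remain in the control functional are honest bounded functions --- and the entire singular analysis is concentrated in the Zvonkin transform and the resulting heat-kernel estimates, which are tools that do exist. To repair your argument along the same lines you would essentially have to reproduce that machinery; as written, the two key lemmas you invoke are not in the literature and at least one appears to be false.
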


The first claim is shown in Theorem~\ref{roughcomingup}, while the second one
is Theorem~\ref{roughoscitheorem}. For simplicity, we restrict our attention
to the periodic KPZ equation on the circle $\mathbb{T}$. But we expect that
similar results hold in weighted spaces for the KPZ equation on the real line.

One of the key ingredients for proving these results will be the variational
representation of $h$ proved in Section 7 of {\cite{KPZreloaded}}, which
formally reads
\[ h (t, x) = \underset{v}{\sup} \mathbb{E}_x \left[ \overline{h} (\gamma^v_t)
   + \int_0^t (\xi (t - s, \gamma_s^v) - \infty) \mathd s - \frac{1}{2}
   {\int^t_0}^{} v_{s }^2 \mathd s \right], \]
where $\gamma_s^v = x + \int^s_0 v_u \mathd u + B_s$, with $B$ a Brownian
motion, and we take the supremum over progressively measurable $\nu$.

\paragraph{Structure of the paper}After recalling some preliminary facts in
Section 2, Section 3 deals with the regular case (meaning $\xi \in C_b ([0, T]
\times \mathbb{R})$) in  R . In that case, the result can be obtained as a
consequence of Bou{\'e}-Dupuis' formula and an estimation of
\[ \log \mathbb{P}_x (B_t \in [- 1, 1]), \]
see Theorem \ref{smoothkpz}.

Section 4 is devoted to the rough setting. The rigorous representation of $h$
involves singular diffusions of the form
\begin{equation}
  \gamma_t = x + \int^t_0 b (s, \gamma_s) \mathd s + B_t, \label{singdiffu}
\end{equation}
where the drift $b$ is distributional with Besov-H{\"o}lder regularity $C_T 
\mathcal{C}^{- \frac{1}{2} -}$. Such a process will not be a functional of
Brownian motion, so a direct application of the Bou{\'e}-Dupuis formula is no
longer possible; however, we can exploit the structure of $\gamma$ as a
Dirichlet process (explored in detail in {\cite{kremp2023rough}}) to
``recover'' Girsanov's theorem and solve this difficulty. Everything then
boils down, as in the regular case, to estimating
\[ \log \mathbb{P}_x (\gamma_t \in [- \varepsilon, \varepsilon]) . \]
In order to do this, we will derive heat kernel estimates for $\gamma$. This
is based on an extension of Zvonkin's transform to the paracontrolled regime
($b \in C_T  \mathcal{C}^{- \alpha} (\mathbb{T})$ for $\alpha \in \left(
\frac{1}{2}, \frac{2}{3} \right)$) in a similar fashion to
{\cite{zhang2017heat}}. Consider $\Phi_{\lambda} (s, y) = y + u_{\lambda} (s,
y)$, where $u_{\lambda} : [0, T] \times \mathbb{R} \rightarrow \mathbb{R}$
solves the (paracontrolled) PDE
\begin{equation}
  \left( \partial_t + \frac{1}{2} \Delta + b \partial_x \right) u_{\lambda} =
  - b + \lambda u_{\lambda} . \label{parapdeINTRO}
\end{equation}
When $\lambda$ is big enough, $\Phi_{\lambda} (s, \cdummy)$ is a
diffeomorphism of  R  for each $s \in [0, T]$, and we will prove that $Y_t =
\Phi_{\lambda} (t, \gamma_t)$ solves (think formally of It{\^o}'s formula)
\[ Y_t = \Phi_{\lambda} (0, x) + \int^t_0 \lambda u_{\lambda} (s, \Phi^{-
   1}_{\lambda} (s, Y_s)) \mathd s + \int^t_0 \nabla \Phi_{\lambda} (s,
   \Phi_{\lambda}^{- 1} (s, Y_s)) \mathd B_s . \]
For $u_{\lambda}$ as in \eqref{parapdeINTRO}, such an SDE has Lipschitz
continuous in space (uniformly in time) drift, and uniformly elliptic,
H{\"o}lder continuous diffusion coefficient, so existence, uniqueness and heat
kernel estimates are available. As $\gamma_t = (\Phi_{\lambda} (t))^{- 1}
(Y_t)$, we thus get heat kernel estimates for \eqref{singdiffu}.

Lastly, in Section 5 we prove the second claim
(Theorem~\ref{roughoscitheorem}), and comment how it is related to Harnack's
inequality for the heat equation, both in the regular and rough setting.

\subsection{Preliminaries and notation}

Throughout the paper we will identify $\mathbb{T} \approx \left[ -
\frac{1}{2}, \frac{1}{2} \right]$.

\begin{definition}
  Let $\alpha \in \mathbb{R}$. The H{\"o}lder-Besov space
  $\mathcal{C}^{\alpha} (\mathbb{T})$ is defined by
  \[ \mathcal{C}^{\alpha} (\mathbb{T}) \equiv B_{\infty, \infty}^{\alpha}
     (\mathbb{T}) \assign \left\{ f \in \mathcal{S}' (\mathbb{T}) : \| f
     \|_{\alpha} : = \underset{i \geqslant - 1}{\sup} (2^{i \alpha} \|
     \Delta_i f \|_{L^{\infty} (\mathbb{T})}) < \infty \right\}, \]
  where $(\Delta_j)_{j \geqslant - 1}$ are the usual Paley-Littlewood
  projections associated with a dyadic partition of unity. We say that
  $\varphi \in \mathcal{C}^{\alpha -} (\mathbb{T})$ if $\varphi \in
  \mathcal{C}^{\alpha - \varepsilon} (\mathbb{T})$ for every $\varepsilon >
  0$.
\end{definition}

For $\alpha \in (0, 1)$ and a given Banach space we will denote by
$\mathcal{C}_T^{\alpha} X$ the space of $\alpha$-H{\"o}lder continuous
functions from $[0, T]$ to $X$, endowed with the norm
\[ {\| f \|_{\mathcal{C}_T^{\alpha} X}}  \assign \underset{t \in [0, T]}{\sup}
   \| f (t) \|_X + \underset{0 \leqslant s < t \leqslant T}{\sup} \frac{\| f
   (t) - f (s) \|_X}{| t - s |^{\alpha}} . \]
Similarly, the space $C_T X$ consists of the continuous functions from $[0,
T]$ to $X$, endowed with the norm
\[ \| f \|_{C_T X \assign} \underset{t \in [0, T]}{\sup} \| f (t) \|_X . \]
The following interpretation of the (regular noise) KPZ equation as the value
function of a stochastic control problem will be of central importance.

\begin{lemma}
  \label{smoothrep}Let $\bar{h} \in \mathcal{C}^{\beta} (\mathbb{R})$ for
  $\beta > 0$, $\xi \in C_b ([0, T] \times \mathbb{R})$ (i.e. continuous and
  bounded) and $h$ solve
  \[ \left( \partial_t - \frac{1}{2} \Delta \right) h = \frac{1}{2} |
     \partial_x h |^2 + \xi, \quad h (0) = \bar{h} . \]
  Then
  \[ h (t, x) = \underset{v}{\sup} \mathbb{E}_x \left[ \bar{h} (X^v_t) +
     \int_0^t \xi (t - s, X_s^v) \mathd s - \frac{1}{2} {\int^t_0}^{} v_{s }^2
     \mathd s \right], \]
  with $X_s^v = x + \int^s_0 v_u \mathd u + B_s .$
\end{lemma}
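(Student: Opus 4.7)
The plan is to linearise the KPZ equation via the Cole--Hopf transform, represent the resulting linear equation by a Feynman--Kac formula, and then convert the logarithm of this expectation into a supremum via the Boué--Dupuis variational formula. First I would set $w = e^h$; by the chain rule (which is legitimate since under the standing hypotheses $\xi \in C_b$ and $\bar h \in \mathcal{C}^\beta$ standard parabolic theory produces a bounded classical $h$), the function $w$ solves the linear equation
\[ \Bigl(\partial_t - \tfrac12 \Delta\Bigr) w = w\,\xi, \qquad w(0,\cdot) = e^{\bar h}. \]
Boundedness of $\xi$ and $\bar h$ (the latter holds since $\mathcal{C}^\beta$ embeds in $C_b$ for $\beta > 0$) yields a unique bounded classical solution $w$, strictly positive because $e^{\bar h} > 0$, and $h = \log w$.

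Next I would write $w$ in Feynman--Kac form: since the coefficient $\xi$ depends on time, the natural representation is the time-reversed one,
\[ w(t, x) = \mathbb{E}_x\!\left[\exp\!\Bigl(\bar h(B_t) + \int_0^t \xi(t - s, B_s)\,\mathrm{d}s\Bigr)\right], \]
which one verifies by checking that the right-hand side solves the linear PDE and appealing to uniqueness. Taking logarithms gives $h(t, x) = \log \mathbb{E}_x[e^{F(B)}]$ with the bounded Wiener functional
\[ F(B) = \bar h(B_t) + \int_0^t \xi(t - s, B_s)\,\mathrm{d}s. \]
Applying the Boué--Dupuis formula to $F$ (whose boundedness is all that is needed beyond measurability) then yields
\[ h(t, x) = \sup_v \mathbb{E}\!\left[\bar h(X_t^v) + \int_0^t \xi(t - s, X_s^v)\,\mathrm{d}s - \tfrac12 \int_0^t v_s^2\,\mathrm{d}s\right], \]
with $X_s^v = x + \int_0^s v_u\,\mathrm{d}u + B_s$ and the supremum taken over progressively measurable $v$ with $\mathbb{E}\int_0^t v_s^2\,\mathrm{d}s < \infty$, which is exactly the claimed representation.

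The individual steps are essentially routine, and the one potentially delicate point is the rigorous application of the Cole--Hopf transform, which requires enough regularity of $h$ (equivalently, of $w$) to differentiate. I would sidestep this by constructing $w$ first as the unique bounded classical solution of the linear equation and then \emph{defining} $h := \log w$, rather than starting from $h$. A dynamic-programming alternative would be to verify directly that the right-hand side is a viscosity (in fact classical, by parabolic regularity) solution of the KPZ equation with optimal feedback $v^* = \partial_x h$ and to conclude by uniqueness; but Cole--Hopf combined with Boué--Dupuis is conceptually cleaner and more closely foreshadows the rough setting treated in Section~4.
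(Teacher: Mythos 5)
Your argument is correct, but it takes a genuinely different route from the paper's. The paper proves Lemma~\ref{smoothrep} by a classical stochastic-control \emph{verification} argument: assume first $\bar h \in C^2$ so that $h \in C^{1,2}$, apply It\^o's formula to $s \mapsto h(t-s, X_s^v)$, use the PDE and complete the square to see that
\[
\mathbb{E}_x\!\left[\bar h(X_t^v) + \int_0^t \xi(t-s,X_s^v)\,\mathrm d s - \tfrac12\int_0^t v_s^2\,\mathrm d s\right]
= h(t,x) - \tfrac12\,\mathbb{E}_x\!\left[\int_0^t |\partial_x h(t-s,\cdummy) - v_s|^2(X_s^v)\,\mathrm d s\right],
\]
which is maximised (and equals $h(t,x)$) at $v_s = \partial_x h(t-s, X_s^v)$; the general $\bar h$ then follows by approximation. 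Your route---Cole--Hopf to linearise, Feynman--Kac to represent $w = e^h$, and Bou\'e--Dupuis applied to $\log \mathbb{E}_x[e^{F(B)}]$ with $F(B) = \bar h(B_t) + \int_0^t \xi(t-s,B_s)\,\mathrm d s$---is also valid and yields exactly the stated formula after re-centring $B$ at the origin. Both approaches are of comparable technical weight (Feynman--Kac and uniqueness for $w$ require the same kind of regularity input as the It\^o-plus-approximation step). One comment on your closing remark: you say Cole--Hopf plus Bou\'e--Dupuis ``more closely foreshadows the rough setting,'' but the paper in fact goes to some lengths to \emph{avoid} Cole--Hopf in its strategy, and the rough analogue Theorem~\ref{varirep} is again a verification argument against a paracontrolled PDE, with the passage to a supremum over measures handled separately in Lemma~\ref{singdupuistheorem}. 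So the paper's two-step structure (verification representation first, then a Bou\'e--Dupuis-style reformulation) is deliberate and is actually what carries over to the singular case, whereas your one-step Cole--Hopf route would not.
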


\begin{proof}
  This is a linear-quadratic control problem and the result is classical. We
  assume that $\bar{h} \in C^2 (\mathbb{R})$. The general case follows by an
  approximation argument. For $\bar{h} \in C^2$ the solution $h$ to the PDE is
  in $C^{1, 2} ([0, T] \times \mathbb{R})$ and therefore we can apply
  It{\^o}'s formula to $\bar{h} (X^v_t) = h (t - t, X^v_t)$ to obtain for a
  martingale $M$
  \begin{eqnarray*}
    \bar{h} (X^v_t) & = & h (t, x) + \int_0^t \left( \partial_s + \frac{1}{2}
    \Delta + v_s \partial_x \right) h (t - s, X^v_s) \mathd s + M_t\\
    & = & h (t, x) - \frac{1}{2} \int_0^t | \partial_x h (t - s, \cdummy) -
    v_s |^2 (X^v_s) \mathd s + \int_0^t \left( - \xi (t - s, X^v_s) +
    \frac{1}{2} v_s^2 \right) \mathd s + M_t,
  \end{eqnarray*}
  and thus
  \[ \mathbb{E}_x \left[ \bar{h} (X^v_t) + \int_0^t \xi (t - s, X_s^v) \mathd
     s - \frac{1}{2} {\int^t_0}^{} v_{s }^2 \mathd s \right] = h (t, x) -
     \frac{1}{2} \mathbb{E}_x \left[ \int_0^t | \partial_x h (t - s, \cdummy)
     - v_s |^2 (X^v_s) \mathd s \right], \]
  which is upper bounded by $h(t,x)$ and equality is obtained for $v_s = \partial_x h (t - s, X^v_s)$.
\end{proof}

Next we provide the statement of the Bou{\'e}-Dupuis formula. Remember that
for two probability measures $\mathbb{Q} \ll \mathbb{P}$, the relative entropy
is defined as
\[ H (\mathbb{Q} | \mathbb{P} \nobracket) = \mathbb{E} \left[ \frac{\mathd
   \mathbb{Q}}{\mathd \mathbb{P}} \log \frac{\mathd \mathbb{Q}}{\mathd
   \mathbb{P}} \right] = \mathbb{E}_{\mathbb{Q}} \left[ \log \frac{\mathd
   \mathbb{Q}}{\mathd \mathbb{P}} \right] . \]
\begin{theorem}
  $\left( \cite{dupuis}, \tmop{Theorem} 3.1 \right) .$ Let $B : [0, T]
  \rightarrow \mathbb{R}^d$ be a $\mathbb{P}$-Brownian motion and $F$ a
  bounded measurable functional on $C ([0, T] ; \mathbb{R}^d)$. Then
  \[ \begin{array}{lll}
       \log \mathbb{E}_{\mathbb{P}} [e^{F (B)}] & = & \underset{\mathbb{Q} \ll
       \mathbb{P}}{\sup} \{ \mathbb{E}_{\mathbb{Q}} [F (B)] - H (\mathbb{Q} |
       \mathbb{P} \nobracket) \}\\
       & = & \underset{v}{\sup}  \mathbb{E}_{\mathbb{P}} \left[ F \left( B +
       \int^{\cdot}_0 v_s \mathd s \right) - \frac{1}{2} \int^T_0 | v_s |^2
       \mathd s \right],
     \end{array} \]
  where in the second line $v$ is taken over all progressively measurable
  processes with respect to the augmented filtration generated by $B$.
\end{theorem}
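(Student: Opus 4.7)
The statement combines two classical facts, and the plan is to prove them in sequence: first the Donsker--Varadhan (Gibbs) variational formula
\[ \log \mathbb{E}_{\mathbb{P}} [e^{F (B)}] = \sup_{\mathbb{Q} \ll \mathbb{P}} \{ \mathbb{E}_{\mathbb{Q}} [F (B)] - H (\mathbb{Q} | \mathbb{P}) \}, \]
and then the conversion of the supremum over measures into a supremum over drifts via Girsanov. For the first equality I would use the entropy inequality $\mathbb{E}_{\mathbb{Q}} [G] \leqslant \log \mathbb{E}_{\mathbb{P}} [e^G] + H (\mathbb{Q} | \mathbb{P})$, which is a direct consequence of Young's inequality for the convex conjugate pair $\phi (x) = x \log x - x + 1$ and $\phi^{\ast} (y) = e^y - 1$ applied to the density $\mathd \mathbb{Q} / \mathd \mathbb{P}$. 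Since $F$ is bounded, the measure $\mathd \mathbb{Q}^{\ast} / \mathd \mathbb{P} \assign e^{F (B)} / \mathbb{E}_{\mathbb{P}} [e^{F (B)}]$ is admissible and turns the inequality into an equality, which establishes the first display.

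For the second equality, the plan is to parametrise admissible measures by their Girsanov drifts. Given $\mathbb{Q} \ll \mathbb{P}$ with finite entropy, the density process $Z_t \assign \mathbb{E}_{\mathbb{P}} [\mathd \mathbb{Q} / \mathd \mathbb{P} | \mathcal{F}_t]$ is a positive martingale on the Brownian filtration, and the martingale representation theorem yields a unique progressively measurable $v$ with $Z = \mathcal{E} (\int^{\cdot}_0 v_s \mathd B_s)$. A direct computation using It{\^o}'s formula and Girsanov's theorem gives
\[ H (\mathbb{Q} | \mathbb{P}) = \mathbb{E}_{\mathbb{Q}} [\log Z_T] = \frac{1}{2} \mathbb{E}_{\mathbb{Q}} \left[ \int^T_0 | v_s |^2 \mathd s \right], \]
and identifies $\tilde{B}_t \assign B_t - \int_0^t v_s \mathd s$ as a $\mathbb{Q}$-Brownian motion. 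Thus under $\mathbb{Q}$ the path $B$ has the same law as $\tilde{B} + \int^{\cdot}_0 v_s \mathd s$; viewing $v$ as a progressively measurable functional of $\tilde{B}$ on canonical path space and relabelling $\tilde{B}$ as a generic $\mathbb{P}$-Brownian motion delivers the representation in the second line.

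The main obstacle in this plan is the approximation needed to reconcile the supremum over admissible measures with one over drifts $v$ that need not satisfy Novikov's condition. For a general progressively measurable $v$, the Dol{\'e}ans--Dade exponential $\mathcal{E} (\int v \mathd B)$ is only guaranteed to be a local martingale, so the change of measure must be justified via truncation $v^N \assign v \1_{\{| v | \leqslant N\}}$ (or stopping), proving the identity for each truncation and then passing to the limit $N \rightarrow \infty$. Here the boundedness of $F$ is crucial: both sides are dominated by $\| F \|_{\infty}$ plus a quadratic cost term that converges monotonically, so dominated and monotone convergence close the argument for the upper bound, while the lower bound is realised by the optimiser $\mathbb{Q}^{\ast}$ identified in the first step, whose drift is $v^{\ast}_s = \partial_x \log \mathbb{E}_{\mathbb{P}} [e^{F (B)} | \mathcal{F}_s]$ in the sense of the martingale representation and is automatically integrable against $e^{F}$.
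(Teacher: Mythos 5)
The paper does not prove this theorem; it is stated as a citation of Theorem~3.1 in the Bou\'e--Dupuis reference, so there is no internal proof to compare against. That said, your proposal is worth assessing on its own.

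The first paragraph (the Donsker--Varadhan/Gibbs variational characterisation of $\log \mathbb{E}[e^F]$ via the entropy inequality and the explicit optimiser $\mathrm{d}\mathbb{Q}^\ast/\mathrm{d}\mathbb{P} \propto e^{F}$) is correct and standard. The third paragraph correctly flags the Novikov issue and the truncation/localisation needed to handle drifts $v$ whose stochastic exponential is only a local martingale.

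The genuine gap is in the ``relabelling'' step at the end of the second paragraph, and it is precisely the step where all the difficulty of the Bou\'e--Dupuis theorem lives. Starting from $\mathbb{Q}\ll\mathbb{P}$ you obtain, via martingale representation and Girsanov, a drift $v$ that is progressively measurable with respect to $(\mathcal{F}^{B}_t)$, together with the $\mathbb{Q}$-Brownian motion $\tilde{B} = B - \int_0^\cdot v_s\,\mathrm{d}s$. To feed this into the second line of the theorem you must view $v$ as a progressively measurable functional of $\tilde{B}$, but $v$ is adapted to $\mathcal{F}^{B}$, not to $\mathcal{F}^{\tilde{B}}$, and in general $\mathcal{F}^{\tilde{B}}_t \subsetneq \mathcal{F}^{B}_t$. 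Equivalently: under $\mathbb{Q}$ the process $B$ is a \emph{weak} solution of $\mathrm{d}B_t = v_t(B)\,\mathrm{d}t + \mathrm{d}\tilde{B}_t$, and a weak solution need not be $\sigma(\tilde{B})$-measurable. So the sentence ``viewing $v$ as a progressively measurable functional of $\tilde{B}$ \dots\ and relabelling $\tilde{B}$ as a generic $\mathbb{P}$-Brownian motion'' is not justified, and this is exactly what the theorem's insistence on $v$ being progressive with respect to the filtration generated by $B$ is about. Closing this gap is the main content of the Bou\'e--Dupuis proof (and of later streamlined versions); it requires an approximation argument, e.g.\ reducing to drifts that are bounded and simple (piecewise-constant in time, cylindrical in the path) so that the SDE $\mathrm{d}X_t = v_t(X)\,\mathrm{d}t + \mathrm{d}B_t$ admits a strong solution and the law of $X$ under $\mathbb{P}$ coincides with the law of $B$ under $\mathbb{Q}$, and then a density/continuity argument. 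Your localisation $v^N = v\,\1_{\{|v|\le N\}}$ in the last paragraph addresses only the integrability of the Dol\'eans--Dade exponential, not this adaptedness issue, so the proof as written is incomplete.
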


We will mostly rely on the second identity, which allows us to pass from the
stochastic control problem to an optimization over probability measures.

For the rest of the paper, we fix $\alpha \in \left( \frac{1}{2}, \frac{2}{3}
\right)$.

\section{Lower bound}

\subsection{Warm-up: The case of bounded continuous forcing}

Let us start by assuming that $\xi \in C_b ([0, T] \times \mathbb{R})$, and we
consider the non-periodic KPZ equation on $[0, T] \times \mathbb{R}$ with
forcing $\xi$. We also assume that $\bar{h} |_{[- 1, 1]} \geqslant 0$, instead
of $h|_{[- \varepsilon, \varepsilon]} \geqslant 0$ as we will assume on the
torus.

\begin{theorem}
  \label{smoothkpz}Let $\bar{h} \in \mathcal{C }^{\beta} (\mathbb{R})$ be such
  that $\bar{h} |_{[- 1, 1]} \geqslant 0 \nobracket$. There exists a constant
  $C = C (T) > 0$ for which
  \[ h (t, x) \geqslant - \int_0^t \| \xi (s) \|_{C_b (\mathbb{R})}
     \mathd s - C \left( 1 + \frac{1_{| x | > 1} x^2}{t} \right),
     \quad  (t, x) \in [0, T] \times \mathbb{R} .  \]
\end{theorem}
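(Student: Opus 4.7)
The plan is to combine Lemma \ref{smoothrep} with the Bou{\'e}-Dupuis formula to express $h(t,x)$ as the logarithm of a Brownian expectation, and then reduce the lower bound to an explicit Gaussian estimate on $\mathbb{P}_x(B_t \in [-1,1])$, as foreshadowed in the introduction.

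First, I would apply the Bou{\'e}-Dupuis theorem in reverse to the variational representation of Lemma \ref{smoothrep}. Choosing $F(B) = \bar{h}(B_t) + \int_0^t \xi(t-s, B_s)\, \mathrm{d}s$, the supremum over controls $v$ in Lemma \ref{smoothrep} coincides exactly with the right-hand side of Bou{\'e}-Dupuis, so
\[
h(t,x) = \log \mathbb{E}_x \left[ \exp\left( \bar{h}(B_t) + \int_0^t \xi(t-s, B_s)\, \mathrm{d}s \right) \right],
\]
where under $\mathbb{P}_x$, $B$ is a standard Brownian motion started at $x$ (one recognises here the classical Cole--Hopf/Feynman--Kac representation). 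For $\bar{h}$ possibly unbounded above I would first truncate to $\bar{h} \wedge N$, apply the bounded-functional version of Bou{\'e}-Dupuis, and pass to the limit by monotone convergence inside the logarithm; since only a lower bound is sought this is harmless.

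Since $\xi \in C_b([0,T] \times \mathbb{R})$, the deterministic bound $\int_0^t \xi(t-s, B_s)\, \mathrm{d}s \geq -\int_0^t \|\xi(s)\|_{C_b}\, \mathrm{d}s$ can be pulled out of the logarithm. The hypothesis $\bar{h}|_{[-1,1]} \geq 0$ then gives the pointwise bound $e^{\bar{h}(B_t)} \geq \mathbbm{1}_{\{B_t \in [-1,1]\}}$, so that
\[
h(t,x) \geq -\int_0^t \|\xi(s)\|_{C_b}\, \mathrm{d}s + \log \mathbb{P}_x(B_t \in [-1,1]).
\]

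All that remains is an explicit computation with the Gaussian density. For $|x| \leq 1$, the normal CDF yields a uniform positive lower bound on $\mathbb{P}_x(B_t \in [-1,1])$ over $x \in [-1,1]$ and $t \in (0,T]$, producing the constant $-C(T)$ term. For $|x| > 1$, lower-bounding the density on $[-1,1]$ by its value at the endpoint farthest from $x$ yields $\mathbb{P}_x(B_t \in [-1,1]) \geq \tfrac{2}{\sqrt{2\pi t}} \exp(-(|x|+1)^2/(2t))$, and the elementary inequalities $(|x|+1)^2 \leq 4 x^2$ for $|x| \geq 1$ together with $-\tfrac{1}{2}\log(2\pi t) \geq -\tfrac{1}{2}\log(2\pi T)$ for $t \in (0,T]$ give $\log \mathbb{P}_x(B_t \in [-1,1]) \geq -C(T)(1 + x^2/t)$. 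Combining the two regimes produces the announced bound. No step is genuinely hard; the main conceptual move is using Bou{\'e}-Dupuis in reverse to dualise the sup representation back into a $\log$-expectation, the same trick that will drive the rough analysis of Section 4.
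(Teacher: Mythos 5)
Your argument is correct and leads to the same final Gaussian estimate as the paper, but you take the \emph{primal} route through Bou\'e--Dupuis where the paper takes the \emph{dual} one. You collapse Lemma~\ref{smoothrep} into the Feynman--Kac/Cole--Hopf formula $h(t,x)=\log\mathbb{E}_x[\exp(\bar h(B_t)+\int_0^t\xi(t-s,B_s)\,\mathd s)]$ and then bound the expectation from below by the pointwise inequality $e^{\bar h(B_t)}\geq\1_{\{B_t\in[-1,1]\}}$. The paper instead keeps the relative-entropy dual $\sup_{\mathbb{Q}\ll\mathbb{P}_x}\{\mathbb{E}_{\mathbb{Q}}[\cdot]-H(\mathbb{Q}\,|\,\mathbb{P}_x)\}$, plugs in the single competitor $\mathbb{Q}_{t,x}=\mathbb{P}_x(\cdot\,|\,B_t\in[-1,1])$, and computes its entropy $-\log\mathbb{P}_x(B_t\in[-1,1])$ via Jensen applied to the Gaussian density. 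The two are the same computation in disguise---your indicator bound \emph{is} the conditional-measure choice---so the only substantive difference is presentational. That said, the paper's dual formulation is the one that survives in Section~4: for the singular drift there is no Feynman--Kac representation $\log\mathbb{E}[e^F]$ available (that would be the Cole--Hopf transform, which the paper deliberately avoids), whereas the entropy dual can still be manufactured via the Dirichlet-process martingale representation (Lemma~\ref{singdupuistheorem}). So your closing remark that the $\log\mathbb{E}[e^F]$ dualisation ``will drive the rough analysis of Section~4'' is slightly off; what carries over is the choice of the conditional measure as the entropy competitor, not the exponential-moment rewrite. The truncation of $\bar h$ that you add is a reasonable way to justify applying Bou\'e--Dupuis to a possibly unbounded functional, and the paper leaves the same point implicit, so no gap there.
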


\begin{proof}
  By the Bou{\'e}-Dupuis formula and Lemma \ref{smoothrep},
  \begin{equation}
    \begin{array}{lll}
      h (t, x) & = & \underset{v}{\sup} \mathbb{E}_x \left[ \bar{h} (X^v_t) +
      \int_0^t \xi (t - s, X_s^v) \mathd s - \frac{1}{2} {\int^t_0}^{} v_{s
      }^2 \mathd s \right]\\
      & = & \underset{\mathbb{Q} \ll \mathbb{P}_x}{\sup} \left\{
      \mathbb{E}_{\mathbb{Q}} \left[ \bar{h} (B_t) + \int_0^t \xi (t - s, B_s)
      \mathd s \right] - H (\mathbb{Q} | \mathbb{P}_x) \nobracket \right\} .
    \end{array} \label{smoothdupuis}
  \end{equation}
  Choosing $\mathbb{Q}_{t, x} = \mathbb{P}_x (\cdummy | B_t \in \nobracket [-
  1, 1])$, we obtain
  \[ \begin{array}{lll}
       H (\mathbb{Q}_{t, x} | \mathbb{P} \nobracket) & = & \mathbb{E}_x \left[
       \frac{\mathd \mathbb{Q}_{t, x}}{\mathd \mathbb{P}_x} \log \frac{\mathd
       \mathbb{Q}_{t, x}}{\mathd \mathbb{P}_x} \right] = \mathbb{E
       }_{\mathbb{Q}_{t, x}} \left[ \log \frac{1_{\{ B_t \in [- 1, 1]
       \}}}{\mathbb{P}_x (B_t \in [- 1, 1])} \right] \\
       & = & - \log
       \mathbb{P}_x (B_t \in [- 1, + 1])\\
       & = & - \log \mathbb{P}_0 (B_t \in [- x - 1, - x + 1])
     \end{array} \]
  If $x \in [- 1, 1]$, then $| [- x - 1, - x + 1] \cap [- \varepsilon,
  \varepsilon] | \geqslant \varepsilon$ for all $\varepsilon \in (0, 1)$, and
  thus
  \[ t \mapsto \mathbb{P}_0 (B_t \in [- x - 1, - x + 1]) \]
  is uniformly (in $x \in [- 1, 1]$) bounded away from $0$ on $(0, T]$. Hence,
  $H (\mathbb{Q}_{t, x} | \mathbb{P}_x \nobracket) \lessapprox 1$ for $x \in
  [- 1, 1]$. On the other hand, for $x \in [- 1, 1]^c$, we have by Jensen's
  inequality and the concavity of the logarithm with the heat kernel $p (t, x)
  = \frac{1}{\sqrt{2 \pi t}} e^{- x^2 / (2 t)}$
  \[ \begin{array}{lcl}
       H (\mathbb{Q}_{t, x} | \mathbb{P}  \nobracket_x) & = & - \log
       \mathbb{P}_0 (B_t \in [- x - 1, - x + 1])\\
       & = & - \log \int^{- x + 1}_{- x - 1} p (t, y) \mathd y\\
       & \leqslant & - \int^{- x + 1}_{- x - 1} \frac{1}{2} \log 2 p (t, y)
       \mathd y\\
       & = & \int^{- x + 1}_{- x - 1} \frac{1}{2} \left( \log 2 + \frac{1}{2}
       \log 2 \pi t + \frac{y^2}{2 t} \right) \mathd y\\
       & \overset{| x | > 1, t \in [0, T]}{\lesssim} & 1 + \frac{x^2}{t},
     \end{array} \]
  so overall
  \[ H (\mathbb{Q}_{t, x} | \mathbb{P} \nobracket_x) \lesssim 1 + \frac{1_{| x
     | > 1} x^2}{t} . \]
  Plugging this back in \eqref{smoothdupuis} and recalling that $\bar{h} |_{[-
  1, 1]} \geqslant 0$ yields
  \begin{eqnarray*}
    h (t, x) & \geqslant & \mathbb{E}_{\mathbb{Q}_{t, x}} \left[ \bar{h} (B_t)
    + \int_0^t \xi (t - s, B_s) \mathd s \right] - H (\mathbb{Q}_{t, x} |
    \mathbb{P} \nobracket_x)\\
    & \geqslant & \mathbb{E}_x \left[ \left. \bar{h} (B_t) + \int_0^t \xi (t
    - s, x + B_s) \mathd s \right| B_t \in [- 1, 1] \right] - C \left( 1 +
    \frac{1_{| x | > 1} x^2}{t} \right)\\
    & \geqslant & \mathbb{E}_x \left[ \left. \int_0^t \xi (t - s, B_s) \mathd
    s \right| B_t \in [- 1, 1] \right] - C \left( 1 + \frac{1_{| x | > 1}
    x^2}{t} \right)\\
    & \geqslant & - \int^t_0 \| \xi (s) \|_{C_b (\mathbb{R})} \mathd s - C
    \left( 1 + \frac{1_{| x | > 1} x^2}{t} \right),
  \end{eqnarray*}
  which concludes the proof.
\end{proof}

\subsection{The case of space-time white noise forcing}

\subsubsection{Singular diffusions and variational representation}

As hinted in the introduction, in order to extend Theorem \ref{smoothkpz} to
the KPZ equation with singular noise, we need a representation of its solution
which is analogous to \eqref{smoothdupuis} (see Theorem \ref{varirep}). Such a
representation will require making sense of diffusion equations of the form
\begin{equation}
  \gamma_t = x + \int^t_0 b (s, \gamma_s) \mathd s + B_t,
  \label{singulardiff2}
\end{equation}
where $b$ is a distribution. In the low regularity regime that we consider
here, martingale problem solutions to this problem were first constructed
in~{\cite{Delarue2016}} with rough path techniques, for scalar-valued
$\gamma$. The rough path approach was extended to multidimensional $\gamma$
with paracontrolled distributions in {\cite{chouk}}; in
{\cite{kremp2023rough}} a notion of weak solution was studied, and proved to
be equivalent to the martingale problem formulation. The solution $\gamma$ is
not a semimartingale, but a Dirichlet process, i.e. it can be decomposed into
a martingale and a process of vanishing quadratic variation.

Recall that $\alpha \in \left( \frac{1}{2}, \frac{2}{3} \right)$ is fixed
throughout the paper. For general $b \in C_T \mathcal{C}^{- \alpha}
(\mathbb{T})$, the equation \eqref{singulardiff2} is not well-posed without
including an enhancement in the following sense (Definition 3.6 in
{\cite{chouk}} and Section 6 of {\cite{kremp2023rough}}).

\begin{definition}
  $(\tmop{Enhanced} \tmop{drift})$\label{enhanceddrift} We say that b$\in C_T
  \mathcal{C}^{- \alpha} (\mathbb{T})$ is an $\tmop{enhanced} \tmop{drift}$ if
  there exists $B \in C_T \mathcal{C }^{1 - 2 \alpha} (\mathbb{T})$ such that
  $(b, B) \in \mathcal{X}^{\alpha}, \tmop{where}$
  \[ \mathcal{X}^{\alpha} : = \tmop{cl}_{C_T \mathcal{C}^{- \alpha} \times C_T
     \mathcal{C}^{1 - 2 \alpha}} \{ (\eta, \mathcal{J}^T (\partial_x \eta)
     \odot \eta)  | \nobracket \eta \in C_T \mathcal{C}^{\infty} (\mathbb{T})
     \}, \]
  where $\tmop{cl}$ means the closure in $C_T \mathcal{C}^{- \alpha}
  (\mathbb{T}) \times C_T \mathcal{C}^{1 - 2 \alpha} (\mathbb{T})$ of the set
  in brackets, $\mathcal{J}^T (u)$ is the solution of
  \[ \left( \partial_t + \frac{1}{2} \mathLaplace \right) \mathcal{J}^T (u) =
     u, \quad \mathcal{J}^T (u) (T) = 0, \]
  and $\odot$ denotes the paraproduct of two distributions (see Chapter 2 of
  {\cite{bahouri2011fourier}} for more details).
\end{definition}

\begin{definition}
  $(\tmop{Martingale} \tmop{solution})$.\label{martingalesol} Let $(\Omega,
  \mathcal{F}, (\mathcal{F}_t)_{t \in [0, T]}, \mathbb{P})$ be a filtered
  probability space and $v$ a progressively measurable process. For a
  distributional drift $b \in C_T \mathcal{C}^{- \alpha} (\mathbb{T})$,
  admitting an enhancement as in ({\cite{chouk}}, Definition 3.6), we say that
  an adapted and continuous stochastic process $\gamma$ is a
  $\tmop{martingale} \tmop{solution}$ to
  \begin{equation}
    \gamma_t = x + \int^t_0 (b (s, \gamma_s) + v_s) \mathd s + B_t
    \label{singulardiff}, \quad 0 \leqslant t \leqslant T,
  \end{equation}
  if $\mathbb{P} (\gamma_0 = x) = 1$ and whenever $\varphi_T \in
  \mathcal{C}^{\alpha} (\mathbb{T})$, $f \in C_T C (\mathbb{T})$ and $\varphi$
  solves the paracontrolled PDE
  \begin{equation}
    \left( \partial_t + \frac{1}{2} \mathLaplace + b \partial_x \right)
    \varphi = f, \quad \varphi (T) = \varphi_T \label{paraPDE}
  \end{equation}
  on $[0, T]$, then
  \[ \varphi (t, \gamma_t) - \int^t_0 (f (s, \gamma_s) + \partial_x \varphi
     (s, \gamma_s) v_s) \mathd s, \quad t \in [0, T] \]
  is a martingale.
\end{definition}

\begin{remark}
  \label{regphi}If $b \in C_T \mathcal{C}^{- \alpha} (\mathbb{T})$, the
  solution $\varphi$ to equation \eqref{paraPDE} belongs to the space $C_T 
  \mathcal{C}^{2 - \alpha} (\mathbb{T}) \cap C^{1 - \frac{\alpha}{2}}_T
  L^{\infty} (\mathbb{T})$, see Theorem 3.10 in {\cite{chouk}}.
\end{remark}

Next, we discuss the decomposition of $\gamma$ as a Dirichlet process. This
will help us derive an It{\^o} formula despite the lack of time regularity.

\begin{definition}
  $(\tmop{Dirichlet} \tmop{process}) .$Let $(\Omega, \mathcal{F},
  (\mathcal{F}_t)_{t \in [0, T]}, \mathbb{P})$ be a filtered probability
  space. A real valued process $D$ is called a {\tmem{Dirichlet process}}
  (resp. {\tmem{weak Dirichlet process}}) if it admits the decomposition
  \begin{equation}
    D = A + M, \label{dirichdecomp}
  \end{equation}
  where $M$ is a continuous local martingale and $A$ has zero quadratic
  variation (resp. $\langle A, N \rangle = 0$ for any continuous local
  martingale $N$). For any (weak) Dirichlet process, decomposition
  \eqref{dirichdecomp} is unique.
\end{definition}

Now we state the theorem relating weak and martingale solutions. In the weak
solution setting, $b$ is again required to admit a proper enhancement
({\cite{kremp2023rough}}, Definition 2.7).

\begin{theorem}
  \label{dirichletdecomp}$\left( \cite{kremp2023rough}, \tmop{Theorem} 5.10
  \right)$. Suppose $b \in C ([0, T], \mathcal{C}^{- \alpha} (\mathbb{T}))$
  and $v = 0$. Then equation \eqref{singulardiff} has a martingale solution
  which is unique in law.
  
  If we denote this law by $\mathbb{P}_x$, there exists a filtered probability
  $(\Omega, \mathcal{F}, (\mathcal{F}_t^X)_{t \in [0, T]}, \mathbb{P})$ and
  stochastic processes $X_t$, $A_t$ and $W_t$ satisfying
  \begin{enumerateroman}
    \item $(\mathcal{F}^X_t)_{t \in [0, T]}$ is the canonical filtration
    generated by $X$ (completed and right-continuous),
    
    \item $\tmop{Law} (X) = \mathbb{P}_x,$
    
    \item $(A)_{t \in [0, T]}$ is a zero quadratic variation process and
    $(W_t)_{t \in [0, T]}$ is a Brownian motion,
    
    \item $X_t = x + A_t + W_t$ almost surely for every $t \in [0, T]$.
  \end{enumerateroman}
\end{theorem}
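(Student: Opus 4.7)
The plan is to use the Zvonkin-type transformation $\Phi_{\lambda}(t, y) = y + u_{\lambda}(t, y)$ from the introduction to reduce the singular martingale problem to a classical SDE, and then to extract both well-posedness and the Dirichlet decomposition from that transformed object. Fix $\lambda$ large enough that $\| u_{\lambda} \|_{C_T \mathcal{C}^1} \leq 1/2$, so that each $\Phi_{\lambda}(t, \cdot)$ is a bi-Lipschitz diffeomorphism with $\mathcal{C}^{2 - \alpha -}$-regular inverse. The essential abstract input is the continuity, proved in Theorem 3.10 of \cite{chouk}, of the paracontrolled solution map $b \mapsto u_{\lambda}$ from $\mathcal{X}^{\alpha}$ into $C_T \mathcal{C}^{2 - \alpha} \cap C^{1 - \alpha / 2}_T L^{\infty}$, together with the analogous continuity of $(b, \varphi_T, f) \mapsto \varphi$ from \eqref{paraPDE}.

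For existence I would approximate $(b, B)$ in $\mathcal{X}^{\alpha}$ by smooth enhancements $(b^n, B^n)$ and consider the classical strong solutions $X^n$ of $\mathd X^n_t = b^n(t, X^n_t) \mathd t + \mathd W^n_t$. The transformed processes $Y^n \assign \Phi^n_{\lambda}(\cdot, X^n)$ satisfy classical SDEs with coefficients that are Lipschitz in space (uniformly in $n, t$) and uniformly elliptic, yielding uniform Kolmogorov-type bounds for $Y^n$-increments; these transfer to $X^n$ by bi-Lipschitzness of $\Phi^n_{\lambda}$ and give tightness of $(X^n, W^n)$. For any subsequential limit $(X, W)$, the martingale problem identity for a smooth test pair $(\varphi_T, f)$ is then obtained by applying classical It{\^o}'s formula to $\varphi^n(t, X^n_t)$ and passing to the limit using stability of $\varphi^n$ under the enhanced convergence $b^n \to b$.

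For uniqueness in law I would apply the martingale identity, after lifting to $\mathbb{R}$ so that $\mathrm{id}$ becomes a legitimate test function, with (formally) $\varphi_T = \mathrm{id}$ and $f = \lambda u_{\lambda}$, to show that $Y_t \assign \Phi_{\lambda}(t, X_t) - \int^t_0 \lambda u_{\lambda}(s, X_s) \mathd s$ is a continuous local martingale with quadratic variation $\int_0^t (\partial_y \Phi_{\lambda}(s, X_s))^2 \mathd s$ (the bracket being identified by the same identity applied to $\mathrm{id}^2$). The Dambis-Dubins-Schwarz theorem then represents $Y$ as an It{\^o} integral against a Brownian motion $W$ on a possibly enlarged space, and $Y$ itself solves a classical, strongly well-posed SDE; this transfers uniqueness in law to $Y$ and hence to $X = \Phi^{-1}_{\lambda}(\cdot, Y)$. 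The same $W$ yields the decomposition: setting $A_t \assign X_t - x - W_t$ and discretizing along partitions, the identity $X = \Phi^{-1}_{\lambda}(\cdot, Y)$ combined with the spatial $C^1$-regularity of $\Phi^{-1}_{\lambda}$ yields $\sum_i (X_{t_{i + 1}} - X_{t_i})^2 \to T$ and $\sum_i (X_{t_{i + 1}} - X_{t_i})(W_{t_{i + 1}} - W_{t_i}) \to T$ in probability, so the partition sums of $A$ tend to $T + T - 2T = 0$.

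The main obstacle is the approximation step. Since $b^n \to b$ only holds in the enhanced topology and never pointwise, the drift integral $\int_0^t b^n(s, X^n_s) \mathd s$ cannot be controlled directly; one has to replace it by the paracontrolled PDE identity for $\varphi^n$ and derive uniform-in-$n$ paracontrolled estimates that genuinely exploit the second component of the enhancement $B^n$. Once this stability is in place, both the uniqueness argument and the zero quadratic variation of $A$ reduce to classical semimartingale computations for the transformed process $Y$.
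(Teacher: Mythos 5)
This theorem is cited verbatim from \cite{kremp2023rough}, Theorem~5.10, and the paper contains no internal proof, so there is no in-paper argument to compare against. That said, your Zvonkin-transform strategy is entirely in the spirit of how the present paper (Theorem~\ref{zvonkintrans}) and its sources handle singular drifts, and the overall architecture---approximate for existence, transform to a classical SDE for uniqueness, and read off the Dirichlet decomposition from the transformed driver---is reasonable.

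Three points need care. First, taking $\varphi_T = \mathrm{id}$ (and, implicitly, $\mathrm{id}^2$ for the bracket) as test data is not covered by Definition~\ref{martingalesol}, which demands $\varphi_T \in \mathcal{C}^\alpha(\mathbb{T})$; after lifting to $\mathbb{R}$ one must either localize or verify that $\Phi_\lambda^2$ actually solves a paracontrolled PDE of the form~\eqref{paraPDE} with $f = 2\Phi_\lambda \lambda u_\lambda + |\partial_x \Phi_\lambda|^2$, which requires a paracontrolled product rule that is plausible but not free. Second, the Dambis--Dubins--Schwarz detour is unnecessary and subtly at odds with item~(i): since $\mathrm{d}\langle Y\rangle_t = \tilde\sigma_t^2\,\mathrm{d}t$ with $\tilde\sigma \in [1/2, 3/2]$, you should simply set $W_t := \int_0^t \tilde\sigma_s^{-1}\,\mathrm{d}Y_s$ and invoke L\'evy's characterization; this produces a Brownian motion adapted to $(\mathcal F^X_t)$ with no enlargement of the space, which is exactly what the statement requires. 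Third, passing from the It\^o process $Y$ back to $X = \Phi_\lambda^{-1}(\cdot, Y)$ only has a $\mathcal{C}^{2-\alpha}$-in-space, $C^{1-\alpha/2}$-in-time transformation available (with $2-\alpha < 3/2$), so the classical It\^o formula does not apply; you do correctly exploit $1 - \alpha/2 > 1/2$ to kill the time-increment contributions, but the spatial part needs a $C^1$-It\^o/quadratic-variation lemma of the type underlying Corollary~\ref{itoformula}, stated and used without circularity. You are right to single out the stability of the paracontrolled PDE under enhanced convergence $(b^n, B^n) \to (b, B)$ in $\mathcal{X}^\alpha$ as the genuinely hard analytical input; pointwise convergence of $b^n$ is unavailable and cannot be substituted.
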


Morally, $A_t = \underset{n \rightarrow \infty}{\lim} \int^t_0 b_n (s, X_s)
\mathd s$ in a suitable sense, for smooth approximations $(b_n)_{n \in
\mathbb{N}}$ of $b$. The precise formulation depends on the assumptions we
want to impose on $b$, and in general takes a more complicated form; see
Definition 3.1 of {\cite{kremp2023rough}}.

\begin{corollary}
  {\tmem{({\cite{dirichlet}}, Proposition~3.10)}}\label{itoformula} In the
  setting of Theorem \ref{dirichletdecomp}, let $\varphi \in
  \mathcatalan_b^{0, 1} ([0, T] ; \mathbb{R})$ (that is, bounded and
  continuous in time with one bounded derivative in space). Then $\varphi
  (\cdot, X_{\cdot})$ is a weak Dirichlet process with decomposition
  \[ \varphi (t, X_t) = \varphi (0, x) + \int^t_0 \partial_x \varphi (s, X_s)
     \mathd W_s + \tilde{A}_t, \]
  where $(\tilde{A}_t)_{t \in [0, T]}$ is such that $\langle \tilde{A}, N
  \rangle = 0$ for any continuous local martingale $N$.
  
  In particular, if $\varphi$ solves \eqref{paraPDE} with $f = 0$, by the
  uniqueness of the decomposition of a weak Dirichlet process, $\tilde{A} =
  0$.
\end{corollary}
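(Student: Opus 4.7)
The proof splits into two parts: the weak Itô formula for $\varphi(t,X_t)$, which is essentially Proposition~3.10 of \cite{dirichlet} applied to $X$, and the identification $\tilde A = 0$ when $\varphi$ solves \eqref{paraPDE} with $f=0$, which follows from the martingale solution property combined with uniqueness of the weak Dirichlet decomposition.

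For the first part, the plan is to approximate $\varphi \in C_b^{0,1}$ by a sequence $\varphi^n \in C_b^{1,2}$ converging, together with first spatial derivatives, uniformly on $[0,T]\times\mathbb{T}$. Since $X = x + A + W$ with $A$ of zero quadratic variation and $W$ a Brownian motion, a Taylor-cum-Riemann-sum expansion of $\varphi^n(t_{i+1},X_{t_{i+1}}) - \varphi^n(t_i,X_{t_i})$ (using $[X]_s = s$, inherited from $W$ since $A$ has vanishing quadratic variation) yields
\[
  \varphi^n(t, X_t) = \varphi^n(0, x) + \int_0^t \partial_x \varphi^n(s, X_s)\,\mathd W_s + \tilde A^n_t,
\]
where $\tilde A^n$ collects the time increments of $\varphi^n$, the contribution of $\partial_x \varphi^n$ against the zero-QV part $A$, and the second-order term $\tfrac12\int \partial_{xx}\varphi^n(s,X_s)\,\mathd s$. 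Each summand has zero covariation with any continuous local martingale $N$: the time and second-order pieces are of finite variation, and the drift-against-$A$ contribution inherits zero quadratic variation from $A$. Passing $n \to \infty$, the stochastic integrals converge by uniform convergence of $\partial_x \varphi^n$, and $\varphi^n(\cdot,X_\cdot) \to \varphi(\cdot,X_\cdot)$ uniformly; hence $\tilde A^n \to \tilde A$ uniformly, and we would like to conclude that the zero-covariation property survives the limit.

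For the second part, assume $\varphi$ solves \eqref{paraPDE} with $f = 0$ and terminal datum $\varphi_T \in \mathcal{C}^{\alpha}(\mathbb{T})$. By Remark~\ref{regphi}, $\varphi \in C_T\mathcal{C}^{2-\alpha} \subset C_b^{0,1}$ (since $2-\alpha > 1$), so the first part applies and gives
\[
  \varphi(t, X_t) = \varphi(0,x) + \int_0^t \partial_x \varphi(s, X_s)\,\mathd W_s + \tilde A_t.
\]
On the other hand, Definition~\ref{martingalesol} applied with $v = 0$ and $f = 0$ asserts that $\varphi(t, X_t)$ is itself a continuous local martingale. We therefore have two weak Dirichlet decompositions of $\varphi(\cdot,X_\cdot) - \varphi(0,x)$: one as martingale plus zero, and one as $\int_0^\cdot \partial_x\varphi \,\mathd W + \tilde A$. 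Uniqueness of the decomposition forces $\tilde A \equiv 0$, and simultaneously identifies the martingale part as $\int_0^\cdot \partial_x \varphi(s,X_s)\,\mathd W_s$.

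The main obstacle is the limiting step in the first part: uniform convergence of $\tilde A^n$ does not a priori preserve zero covariation with every continuous local martingale $N$, since covariation is defined through Riemann-sum limits and is not continuous on $C([0,T])$. The argument of \cite{dirichlet} handles this by treating $\tilde A^n$ piecewise — the finite-variation contributions pass trivially, while for the $A$-driven piece one uses the stability of zero quadratic variation under uniformly bounded multiplication by $\partial_x\varphi^n$ and a Cauchy-Schwarz control on $\langle \tilde A^n - \tilde A^m, N\rangle$ — so that the zero-covariation property survives.
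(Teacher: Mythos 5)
Your proposal is correct and matches the paper's (implicit) argument. The paper does not give a separate proof block: the weak It\^o formula is exactly what it imports from \cite{dirichlet}, Proposition~3.10 (cited directly in the statement), and the ``in particular'' claim is justified inline by uniqueness of the weak Dirichlet decomposition. Your second part is precisely that reasoning, correctly spelled out: $\varphi\in C_T\mathcal{C}^{2-\alpha}\subset C^{0,1}_b$ by Remark~\ref{regphi}, Definition~\ref{martingalesol} with $v=0$, $f=0$ makes $\varphi(\cdot,X_\cdot)$ a martingale, and uniqueness forces $\tilde A\equiv 0$ while identifying the martingale part as the stochastic integral against $W$. Your sketch of the approximation argument behind the first part (and the honest flag that the passage to the limit of the zero-covariation property is the delicate step) is accurate as a description of what the cited reference does, but it is not something the present paper reproves; it is simply imported.
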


Next, we use the decomposition of $(\varphi (t, X_t))_t$ as a weak Dirichlet
process to prove a martingale representation in the context of
Theorem~\ref{dirichletdecomp}. This does not follow from the classical
martingale representation theorem for Brownian motion, because we only know
that $X$ is a weak solution and it is not clear if $\mathcal{F}^X_t \subset
\mathcal{F}^W_t$.

\begin{proposition}
  \label{martingalerep}Again in the setting of Theorem \ref{dirichletdecomp},
  let $M$ be an $L^2$-martingale on the filtered probability space $(\Omega, \mathcal{F},
  (\mathcal{F}_t^X)_{t \in [0, T]}, \mathbb{P})$. Then, there exists a
  progressively measurable $u$ such that $\mathbb{E} \left[ \int^T_0 u^2_s
  \mathd s \right] < \infty$ and
  \[ M_t = \mathbb{E} [M_0] + \int^t_0 u_s \mathd W_s, \qquad t \in [0, T] .
  \]
\end{proposition}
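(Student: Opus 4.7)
The plan is to run a closedness-plus-density argument: let $\mathcal{H}\subset L^2(\mathcal{F}_T^X)$ be the set of $F$ admitting the claimed representation $F = \mathbb{E}[F] + \int_0^T u_s\,\mathd W_s$ for some progressively measurable $u \in L^2(\mathd\mathbb{P} \otimes \mathd t)$. By It\^o's isometry $\mathcal{H}$ is a closed subspace of $L^2(\mathcal{F}_T^X,\mathbb{P})$, so it suffices to produce martingale representations for a dense family. The base case is single-time functionals: for $\tau\in(0,T]$ and $\psi\in\mathcal{C}^\alpha(\mathbb{T})$, I would solve the backward paracontrolled PDE $(\partial_t+\tfrac12\Delta+b\partial_x)\varphi=0$ on $[0,\tau]$ with $\varphi(\tau)=\psi$. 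Remark~\ref{regphi} gives $\varphi\in C_\tau\mathcal{C}^{2-\alpha}(\mathbb{T})$, so $\partial_x\varphi$ is uniformly bounded. Applying Corollary~\ref{itoformula} on $[0,\tau]$, taking expectations to identify $\varphi(0,x)=\mathbb{E}[\psi(X_\tau)]$, and extending the integrand by zero past $\tau$ places $\psi(X_\tau)$ in $\mathcal{H}$.

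The main step is to extend this to product cylinders $F_n:=\prod_{i=1}^n\psi_i(X_{\tau_i})$ with $0\leqslant\tau_1<\cdots<\tau_n\leqslant T$ and $\psi_i\in\mathcal{C}^\alpha(\mathbb{T})$, by induction on $n$. At the inductive step I would invoke the Markov property of $X$---standard for unique-in-law martingale solutions by Theorem~\ref{dirichletdecomp}---to write $\mathbb{E}[\psi_n(X_{\tau_n})\mid\mathcal{F}^X_{\tau_{n-1}}]=\varphi_n(\tau_{n-1},X_{\tau_{n-1}})$, where $\varphi_n$ solves the backward paracontrolled PDE on $[\tau_{n-1},\tau_n]$ with terminal $\psi_n$. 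On $[\tau_{n-1},\tau_n]$, Corollary~\ref{itoformula} yields the representation $\varphi_n(t,X_t)-\varphi_n(\tau_{n-1},X_{\tau_{n-1}})=\int_{\tau_{n-1}}^t\partial_x\varphi_n(s,X_s)\,\mathd W_s$, and the $\mathcal{F}^X_{\tau_{n-1}}$-measurable prefactor $\prod_{i=1}^{n-1}\psi_i(X_{\tau_i})$ can be absorbed inside the stochastic integral. On $[0,\tau_{n-1}]$, I would replace $\psi_{n-1}$ by $\tilde\psi_{n-1}:=\psi_{n-1}\cdot\varphi_n(\tau_{n-1},\cdot)$; since $\varphi_n(\tau_{n-1},\cdot)\in\mathcal{C}^{2-\alpha}(\mathbb{T})\subset\mathcal{C}^\alpha(\mathbb{T})$, the product is still $\mathcal{C}^\alpha$, and the inductive hypothesis applied to the $(n-1)$-fold product $\prod_{i=1}^{n-2}\psi_i(X_{\tau_i})\cdot\tilde\psi_{n-1}(X_{\tau_{n-1}})$ supplies the representation on $[0,\tau_{n-1}]$. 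Concatenating the two pieces (and the constant continuation on $[\tau_n,T]$) gives $F_n\in\mathcal{H}$.

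Finally, linear combinations of such products form an algebra of bounded continuous functionals of $X$ that separates points and contains constants; Stone--Weierstrass on $\mathbb{T}^n$, density of $\mathcal{C}^\alpha(\mathbb{T})$ in $C(\mathbb{T})$, and the standard approximation of $\mathcal{F}_T^X$-measurable $L^2$-random variables by continuous cylindrical functions of $X$ show this linear span is dense in $L^2(\mathcal{F}_T^X)$. Closedness of $\mathcal{H}$ then forces $\mathcal{H}=L^2(\mathcal{F}_T^X)$, and applying this with $F=M_T$ and conditioning on $\mathcal{F}_t^X$ yields the stated representation. The hard part of this program is the regeneration step in the induction, where the intermediate terminal condition produced by one backward-PDE layer must remain admissible for the next: this hinges on the regularity gain of Remark~\ref{regphi}, whereby the solution lives in $\mathcal{C}^{2-\alpha}$, which is strictly better than the $\mathcal{C}^\alpha$ input and whose pointwise product with a $\mathcal{C}^\alpha$ function stays in $\mathcal{C}^\alpha$, so the induction closes cleanly. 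A secondary but essential subtlety is the Markov property of the unique-in-law solution $X$, which I would invoke via a Stroock--Varadhan-type argument from uniqueness of the martingale problem.
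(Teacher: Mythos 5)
Your proof is correct and follows essentially the same route as the paper: represent single-time marginals via the backward paracontrolled PDE together with Corollary~\ref{itoformula}, handle product cylinders by the backward recursion that absorbs the $\mathcal{F}^X_{\tau_{n-1}}$-measurable prefactor into the stochastic integral and folds the intermediate terminal data into the next layer, and conclude by density (the paper phrases this as a monotone class argument \`a la Le Gall; your closed-subspace plus Stone--Weierstrass packaging is the same idea). One small remark: the Markov property you flag as an ``essential subtlety'' is not actually needed---Corollary~\ref{itoformula} applied to $\varphi_n$ on $[\tau_{n-1},\tau_n]$ already gives $\psi_n(X_{\tau_n}) = \varphi_n(\tau_{n-1},X_{\tau_{n-1}}) + \int_{\tau_{n-1}}^{\tau_n}\partial_x\varphi_n(s,X_s)\,\mathd W_s$ directly, so the identification of $\varphi_n(\tau_{n-1},X_{\tau_{n-1}})$ as a conditional expectation is a byproduct rather than an input, and the paper's proof avoids invoking it altogether.
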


\begin{proof}
  Let $\bar{\varphi} \in C_b (\mathbb{T})$ and $t \in [0, T]$. If we consider
  the solution on $[0, t] \times \mathbb{T}$ to
  \[ \left( \partial_t + \frac{1}{2} \mathLaplace + b \nabla \right) \varphi =
     0, \quad \varphi (t) = \bar{\varphi}, \]
  we have by Corollary \ref{itoformula} that
  \[ \bar{\varphi} (X_t) = \varphi (0, x) + \int^t_0 \partial_x \varphi (s,
     X_s) \mathd W_s . \]
  If instead we consider two times $0 \leqslant t_1 < t_2$ and
  $\bar{\varphi}_1, \bar{\varphi}_2 \in C_b (\mathbb{T})$,
  \begin{align*}
       &\bar{\varphi}_1 (X_{t_1}) \bar{\varphi}_2 (X_{t_2}) =
       \bar{\varphi}_1 (X_{t_1}) \left( \varphi^{t_2, \bar{\varphi}_2} (t_1,
       X_{t_1}) + \int^{t_2}_{t_1} \partial_x \varphi^{t_2, \bar{\varphi}_2}
       (s, X_s) \mathd W_s \right)\\
       & = \int^{t_2}_{t_1} \bar{\varphi}_1 (X_{t_1}) \partial_x
       \varphi^{t_2, \bar{\varphi}_2} (s, X_s) \mathd W_s + \varphi^{t_1,
       \bar{\varphi}_1 \varphi^{t_2, \bar{\varphi}_2} (t_1, \cdot)} (0, x)\\
       &\qquad + \int^{t_1}_0 \partial_x \varphi^{t_1, \bar{\varphi}_1
       \varphi^{t_2, \bar{\varphi}_2} (t_1, \cdot)} (s, X_s) \mathd W_s\\
       & = \int^{t_2}_0 \left( 1_{[0, t_1]} (s) \partial_x \varphi^{t_1,
       \bar{\varphi}_1 \varphi^{t_2, \bar{\varphi}_2} (t_1, \cdot)} (s, X_s) +
       1_{[t_1, t_2]} (s) \bar{\varphi}_1 (X_{t_1}) \partial_x \varphi^{t_2,
       \bar{\varphi}_2} (s, X_s) \right) \mathd W_s\\
       &\qquad + \varphi^{t_1, \bar{\varphi}_1 \varphi^{t_2, \bar{\varphi}_2}
       (t_1, \cdot)} (0, x),
  \end{align*}
  where we denote by $\varphi^{s, \psi}$ the solution on $[0, s] \times
  \mathbb{T}$ of
  \[ \left( \partial_t + \frac{1}{2} \mathLaplace + b \partial_x \right)
     \varphi = 0, \qquad \varphi (s) = \psi . \]
  This argument generalizes to $\bar{\varphi}_1 (X_{t_1}) \bar{\varphi}_2
  (X_{t_2}) \cdots \bar{\varphi}_n (X_{t_n})$, so the result follows by a
  monotone class argument as in the classical martingale representation
  theorem (see for instance Section 5.4 of {\cite{legall}}).
\end{proof}

Finally, we introduce the stochastic control representation for $h$. We denote
by $\mathfrak{p}\mathfrak{m}$ the family of progressively measurable
processes, over all possible filtered probability spaces, and for a given $v
\in \mathfrak{p}\mathfrak{m}$ (and thus implicitly a given filtered
probability space), the family $\mathfrak{M} (v, x)$ consists of all
martingale solutions to \eqref{singulardiff}.

\begin{theorem}
  $\left( \cite{KPZreloaded}, \tmop{Theorem} 7.13 \right) . \label{varirep}$
  Let $\overline{h} \in \mathcal{C}^{\beta} (\mathbb{T})$ for any
  $\beta > 0$ and
  \[ b (s, y) = \partial_x (Y + Y^{\vee} + Y^R) (t - s, y), \quad s \in [0,
     t], \]
  with appropriately chosen stochastic data (depending only on the white
  noise $\xi$, see Remark \ref{enhancednoise})
  \[ (Y, Y^{\vee}, Y^R) \in C_T  \mathcal{C}^{1 / 2 -} (\mathbb{T}) \times C_T
     \mathcal{C}^{1 -} (\mathbb{T}) \times C_T  \mathcal{C}^{3 / 2 -}
     (\mathbb{T}) . \]
  Then,
  \begin{equation}
    \begin{array}{l}
      (h - Y - Y^{\vee} - Y^R) (t, x)\\
      = \underset{v \in \mathfrak{p}\mathfrak{m}}{\sup} \underset{\gamma \in
      \mathfrak{M} (v, x)}{\sup} \mathbb{E} \left[ \overline{h} (\gamma_t) - Y
      (0, \gamma_t) + \int^t_0 | \nobracket \partial_x Y^R (t - s, \gamma_s)
      |^2 \mathd s - \frac{1}{2} \int^t_0 | v_s (\gamma) |^2 \mathd s \right],
    \end{array} \label{realformula}
  \end{equation}
  and the optimal $v$ is
  \[ v_s (\gamma) = \partial_x (h - Y - Y^{\vee} - Y^R) (t - s, \gamma_s) . \]
  For both this optimal $v$ and $v = 0$, the family $\mathfrak{M} (v, x)$ is
  non-empty and every $\gamma \in \mathfrak{M} (v, x)$ has the same law.
\end{theorem}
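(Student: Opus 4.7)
The plan is a verification-type argument: derive a backward paracontrolled PDE for the remainder $w:=h-Y-Y^\vee-Y^R$, apply Definition \ref{martingalesol} along an arbitrary admissible pair $(v,\gamma)$, and complete a square. Substituting $h=Y+Y^\vee+Y^R+w$ into the KPZ equation and using that $Y,Y^\vee,Y^R$ are constructed precisely so as to absorb $\xi$ and the successive singular resonant products from the expansion of $|\partial_x h|^2$ (this is the content of the paracontrolled fixed-point theory in \cite{KPZreloaded}), all cancellations should leave
\begin{equation*}
(\partial_t-\tfrac12\Delta)w=b\,\partial_x w+\tfrac12|\partial_x w|^2+|\partial_x Y^R|^2,\qquad w(0,\cdot)=\bar h-Y(0,\cdot),
\end{equation*}
with $b(s,\cdot)=\partial_x(Y+Y^\vee+Y^R)(t-s,\cdot)$; I take this identity, and the paracontrolled structure of $w$ that makes $\partial_x w$ continuous, as input from \cite{KPZreloaded}.

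Time-reversing, $\tilde w(s,y):=w(t-s,y)$ satisfies the \emph{linear} backward paracontrolled PDE
\begin{equation*}
\bigl(\partial_s+\tfrac12\Delta+b\,\partial_x\bigr)\tilde w=-\tfrac12|\partial_x\tilde w|^2-|\partial_x Y^R(t-s,\cdot)|^2=:f,\qquad \tilde w(t,\cdot)=\bar h-Y(0,\cdot),
\end{equation*}
in which the formerly nonlinear term is now a given continuous source. For any $v\in\mathfrak{p}\mathfrak{m}$ and $\gamma\in\mathfrak{M}(v,x)$, Definition \ref{martingalesol} yields that $\tilde w(s,\gamma_s)-\int_0^s\bigl(f(r,\gamma_r)+\partial_x\tilde w(r,\gamma_r)\,v_r\bigr)\,dr$ is a martingale; taking expectations at $s=t$ and using $\tilde w(0,x)=w(t,x)$ together with $\tilde w(t,\gamma_t)=\bar h(\gamma_t)-Y(0,\gamma_t)$ (assuming, as is standard in the construction, that $Y^\vee$ and $Y^R$ vanish at $t=0$), one obtains
\begin{equation*}
w(t,x)=\mathbb{E}\bigl[\bar h(\gamma_t)-Y(0,\gamma_t)\bigr]+\mathbb{E}\!\int_0^t\!\bigl(\tfrac12|\partial_x\tilde w|^2+|\partial_x Y^R|^2-\partial_x\tilde w\cdot v_r\bigr)\,dr.
\end{equation*}

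Completing the square via $\tfrac12|\partial_x\tilde w|^2-\partial_x\tilde w\cdot v_r=-\tfrac12v_r^2+\tfrac12(\partial_x\tilde w-v_r)^2$ rearranges this into
\begin{equation*}
w(t,x)=\mathbb{E}\Bigl[\bar h(\gamma_t)-Y(0,\gamma_t)+\int_0^t|\partial_x Y^R(t-r,\gamma_r)|^2\,dr-\tfrac12\int_0^t v_r^2\,dr\Bigr]+\tfrac12\mathbb{E}\!\int_0^t\!\bigl(\partial_x\tilde w(r,\gamma_r)-v_r\bigr)^2\,dr.
\end{equation*}
The last term is nonnegative, so $w(t,x)$ dominates the variational expression uniformly in $(v,\gamma)$, with equality exactly when $v_r=\partial_x\tilde w(r,\gamma_r)=\partial_x w(t-r,\gamma_r)$; taking $\sup_v\sup_\gamma$ produces \eqref{realformula} and identifies the claimed optimizer.

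The hard part is not this formal manipulation but its rigorous justification. One must show (i) that $w$ is paracontrolled with a remainder of high enough regularity that $\partial_x\tilde w\in C_TC(\mathbb{T})$ — so $f$ is a bona fide continuous source — and that the product $b\,\partial_x\tilde w$ in the linear PDE coincides with the paracontrolled product implicit in Definition \ref{martingalesol}; this is the content of the paracontrolled solution theory for KPZ. Secondly, the martingale problem must be well posed for both drifts that arise here: at $v=0$ this is Theorem \ref{dirichletdecomp}, but at the optimizer the combined drift is $b+\partial_x w(t-\cdot,\cdot)=\partial_x h(t-\cdot,\cdot)\in C_T\mathcal{C}^{-\alpha}$, so one must exhibit an enhancement in the sense of Definition \ref{enhanceddrift} and re-apply Theorem \ref{dirichletdecomp}. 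Proposition \ref{martingalerep} is the natural tool to transfer measures between the two drifts via a Girsanov-type construction. This second point — admissibility of the optimizer — is precisely what the final sentence of the theorem asserts, and is the most delicate step of the proof.
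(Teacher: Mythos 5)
The paper does not prove this theorem — it cites it from \cite{KPZreloaded} (Theorem 7.13) — and the only proof-relevant content it contains is Remark \ref{enhancednoise}, which records that $h^R=h-Y-Y^\vee-Y^R$ solves a paracontrolled HJB-type PDE and that the result then follows from Definition \ref{martingalesol}. Your proposal — deriving the PDE for the remainder, time-reversing, applying the martingale-solution definition with the nonlinearity frozen as a continuous source, completing the square, and flagging that well-posedness must be checked for the drift $\partial_x h(t-\cdot,\cdot)$ induced by the optimizer — is precisely this sketch fleshed out, so your approach matches the paper's.
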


\begin{remark}
  \label{enhancednoise}In Theorem \ref{varirep}, $Y$ and $Y^{\vee}$ are the
  usual trees
  \[ \begin{array}{l}
       \left( \partial_t - \frac{\Delta}{2} \right) Y = \xi,\\
       \left( \partial_t - \frac{\Delta}{2} \right) Y^{\vee} = \frac{1}{2} |
       \partial_x Y |^2 - \infty,
     \end{array} \]
  where $Y^{\vee}$ needs to be renormalized, whereas $Y^R$ solves the
  paracontrolled PDE
  \[ \left( \partial_t - \frac{\Delta}{2} \right) Y^R = \frac{1}{2} |
     \partial_x Y^{\vee} |^2 + \partial_x Y \partial_x Y^{\vee} + \partial_x
     (Y + Y^{\vee}) \partial_x Y^R, \quad Y^R (0) = 0. \]
  The theorem is then proved by noting that $h^R \equiv h - Y - Y^{\vee} -
  Y^R$ solves the paracontrolled PDE
  \[ \left( \partial_t - \frac{\Delta}{2} \right) h^R = \frac{1}{2} |
     \partial_x Y^R |^2 + \partial_x (Y + Y^{\vee} + Y^R) \partial_x h^R +
     \frac{1}{2} | \partial_x h^R |^2, \quad h^R (0) = \overline{h} - Y (0),
  \]
  and using Definition \ref{martingalesol}.
\end{remark}

Formula \ref{realformula} involves drifts $v$, and we want to replicate the
argument we used for the regular equation, so an expression with measures (and
the ``$v = 0$ diffusion'') would be desirable. This is where the martingale
representation helps us; the proof of the following Lemma is reminiscent of
one of the inequalities in the Bou{\'e}-Dupuis formula.

\begin{lemma}
  \label{singdupuistheorem}Let $t \in [0, T]$, $x \in \mathbb{T}$ and let
  $\mathbb{P}_{t, x}$ denote the law of the unique martingale solution to
  \[ \gamma_s = x + \int^s_0 (\partial_x Y + \partial_x Y^{\vee} + \partial_x
     Y^R) (t - u, \gamma_u) \mathd u + B_s, \quad 0 \leqslant s \leqslant t.
  \]
  Then,
  \begin{equation}
    \begin{array}{l}
      \underset{v \in \mathfrak{p}\mathfrak{m}}{\sup} \underset{\gamma \in
      \mathfrak{M} (v, x)}{\sup} \mathbb{E} \left[ \overline{h} (\gamma_t) - Y
      (0, \gamma_t) + \int^t_0 | \nobracket \partial_x Y^R (t - s, \gamma_s)
      |^2 \mathd s - \frac{1}{2} \int^t_0 | v_s (\gamma) |^2 \mathd s
      \right]\\
      = \underset{\mathbb{Q} \ll \mathbb{P}_{t, x}}{\sup} \left\{
      \mathbb{E}_{\mathbb{Q}} \left[ \overline{h} (\gamma_t) - Y (0, \gamma_t)
      + \int^t_0 | \nobracket \partial_x Y^R (t - s, \gamma_s) |^2 \mathd s
      \right] - H (\mathbb{Q} | \mathbb{P}_{t, x} \nobracket) \right\} .
    \end{array} \label{singdupuis}
  \end{equation}
\end{lemma}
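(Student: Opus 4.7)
The strategy mirrors the standard derivation of the Bou{\'e}-Dupuis formula: I want to show that a control $v$ and a measure $\mathbb{Q} \ll \mathbb{P}_{t, x}$ give rise to one another through a Girsanov change of measure, in such a way that the relative entropy $H (\mathbb{Q} | \mathbb{P}_{t, x})$ coincides with $\frac{1}{2} \mathbb{E}_{\mathbb{Q}} [\int_0^t v_s^2 \mathd s]$. The underlying Brownian motion used in Girsanov is the one supplied by the Dirichlet decomposition of Theorem~\ref{dirichletdecomp}, and the role of the classical martingale representation theorem is played by Proposition~\ref{martingalerep}.

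For the inequality $(\leq)$, I would fix $v \in \mathfrak{p}\mathfrak{m}$ and $\gamma \in \mathfrak{M} (v, x)$ and realize it on the filtered space supplied by Theorem~\ref{dirichletdecomp} applied to the $v = 0$ law $\mathbb{P}_{t, x}$, so that $\gamma_s = x + A_s + W_s$. Setting $\mathbb{Q}'$ on that space via $\mathd \mathbb{Q}'/\mathd \mathbb{P}_{t, x} = \mathcal{E} (\int_0^{\cdot} v_s \mathd W_s)_t$, Girsanov turns $W' := W - \int_0^{\cdot} v_s \mathd s$ into a Brownian motion under $\mathbb{Q}'$; the decomposition $\gamma = x + A + \int_0^{\cdot} v_s \mathd s + W'$, combined with Corollary~\ref{itoformula} applied under $\mathbb{P}_{t, x}$ and transported to $\mathbb{Q}'$, then shows that $\gamma$ is a martingale solution of \eqref{singulardiff} with control $v$ under $\mathbb{Q}'$. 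Hence $\mathbb{Q}'$ coincides (after transport to canonical path space) with the law $\mathbb{Q}$ of $\gamma$, and
\[ H (\mathbb{Q} | \mathbb{P}_{t, x}) = \mathbb{E}_{\mathbb{Q}} \Bigl[ \int_0^t v_s \mathd W_s - \tfrac{1}{2} \int_0^t v_s^2 \mathd s \Bigr] = \tfrac{1}{2} \mathbb{E}_{\mathbb{Q}} \Bigl[ \int_0^t v_s^2 \mathd s \Bigr], \]
the last equality following from $\int_0^{\cdot} v_s \mathd W_s = \int_0^{\cdot} v_s \mathd W'_s + \int_0^{\cdot} v_s^2 \mathd s$ and the fact that $\int_0^{\cdot} v_s \mathd W'_s$ is a mean-zero $\mathbb{Q}$-local martingale.

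For $(\geq)$, I would start from $\mathbb{Q} \ll \mathbb{P}_{t, x}$ with finite entropy, define the $\mathbb{P}_{t, x}$-martingale $M_s := \mathbb{E}_{\mathbb{P}_{t, x}} [\mathd \mathbb{Q} / \mathd \mathbb{P}_{t, x} | \mathcal{F}_s^{\gamma}]$, and apply Proposition~\ref{martingalerep} after a truncation step that reduces the situation to an $L^2$ martingale (for instance, replacing $\mathd \mathbb{Q} / \mathd \mathbb{P}_{t, x}$ by $(\mathd \mathbb{Q} / \mathd \mathbb{P}_{t, x}) \wedge n$, suitably normalized, and passing to the limit $n \to \infty$). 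This yields a progressively measurable $u$ with $M_s = 1 + \int_0^s u_r \mathd W_r$; setting $v_s := u_s / M_s$ on $\{ M_s > 0 \}$ gives $M_s = \exp (\int_0^s v_r \mathd W_r - \frac{1}{2} \int_0^s v_r^2 \mathd r)$, whence $W' := W - \int_0^{\cdot} v_r \mathd r$ is a $\mathbb{Q}$-Brownian motion by Girsanov. Verifying the martingale problem of Definition~\ref{martingalesol} against a solution $\varphi$ of \eqref{paraPDE} (using Corollary~\ref{itoformula} under $\mathbb{P}_{t, x}$ to decompose $\varphi (\cdot, \gamma_{\cdot})$, then transferring via $W \mapsto W'$) shows that $\gamma \in \mathfrak{M} (v, x)$ under $\mathbb{Q}$, and the same entropy computation as above concludes.

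The main obstacle is the $(\geq)$ direction, where two points require care: first, Proposition~\ref{martingalerep} is stated only for $L^2$-martingales, so the truncation-and-passage-to-the-limit argument must be made precise, relying crucially on the finite-entropy assumption to obtain weak compactness of the approximating controls; and second, one has to check that the $v$ constructed from martingale representation genuinely upgrades $\gamma$ to a martingale solution of \eqref{singulardiff} with control $v$ in the sense of Definition~\ref{martingalesol}, i.e.\ that the extra drift $\partial_x \varphi (s, \gamma_s) v_s$ appearing in that definition is correctly produced by the Girsanov drift of $\varphi (\cdot, \gamma_{\cdot})$ under $\mathbb{Q}$.
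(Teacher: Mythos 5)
Your overall architecture---pass between $v$ and $\mathbb{Q}$ via Girsanov on the Brownian motion $W$ supplied by the Dirichlet decomposition, with Proposition~\ref{martingalerep} standing in for the classical martingale representation theorem, and the entropy identity $H(\mathbb{Q}\,|\,\mathbb{P}_{t,x}) = \tfrac{1}{2}\mathbb{E}_{\mathbb{Q}}[\int_0^t v_s^2\,\mathd s]$---is exactly the paper's. But you miss the one observation that lets the paper avoid all the technical issues you flag as ``the main obstacle'': because the reward $F(\gamma) = \overline{h}(\gamma_t) - Y(0,\gamma_t) + \int_0^t |\partial_x Y^R(t-s,\gamma_s)|^2\,\mathd s$ is bounded, the right-hand supremum is attained (Donsker--Varadhan / Proposition~2.5 in \cite{dupuis}) at the explicit Gibbs measure $\overline{\mathbb{Q}}$ with $\mathd\overline{\mathbb{Q}}/\mathd\mathbb{P}_{t,x} = e^{F}/\mathbb{E}[e^F]$, whose density is bounded \emph{above and below}. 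The paper therefore only has to produce a control $\nu$ for this one $\overline{\mathbb{Q}}$: the corresponding martingale $R_s$ is automatically bounded, hence in $L^2$, so Proposition~\ref{martingalerep} applies directly; $R$ is bounded away from zero, so the division $\nu_s = u_s/R_s$ is unproblematic and $\nu$ is square-integrable. Dually, for the reverse inequality only the optimal $v^*$ from Theorem~\ref{varirep} is needed, for which existence and uniqueness in law are already guaranteed.

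Your attempt to treat an arbitrary finite-entropy $\mathbb{Q}$ introduces genuine gaps that the paper's restriction to the optimizer circumvents. The truncation $(\mathd\mathbb{Q}/\mathd\mathbb{P}_{t,x})\wedge n$ followed by a passage to the limit is not carried out, and it is not a formality: one must show that the truncated controls converge in a sense strong enough to preserve both the Girsanov structure and the entropy identity, and must handle the set $\{M_s = 0\}$ which can have positive probability under $\mathbb{P}_{t,x}$ for a general $\mathbb{Q}$. Similarly, in your $(\leqslant)$ direction you define $\mathbb{Q}'$ via $\mathcal{E}(\int_0^{\cdot} v_s\,\mathd W_s)_t$ for an arbitrary control $v$, but without further integrability (Novikov-type) this stochastic exponential need not be a true martingale, so $\mathbb{Q}'$ need not be a probability measure; and your opening decomposition $\gamma_s = x + A_s + W_s$ is that of the $v=0$ solution, not of a given $\gamma\in\mathfrak{M}(v,x)$, so the argument conflates two different processes and implicitly uses uniqueness in law for arbitrary $v$, which Theorem~\ref{varirep} only asserts for $v=0$ and the optimal $v$. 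In short: right toolbox, but you overlook that the proof only needs to work at the explicit optimizers, and that is precisely what makes the martingale representation step and the Girsanov step go through without further argument.
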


\begin{proof}
  To prove that the first line of \eqref{singdupuis} is greater than or equal
  to the second, we start by noting that since $\overline{h}$, $Y$ and
  $\partial_x Y^R \tmop{are} \tmop{all} \tmop{bounded} \tmop{functions},$the
  supremum in the second formula is uniquely attained by
  $\overline{\mathbb{Q}}$ defined as
  \[ \frac{\mathd \overline{\mathbb{Q}}}{\mathd \mathbb{P }_{t, x}}  =
     \frac{\exp \left( \overline{h} (\gamma_t) - Y (0, \gamma_t) + \int^t_0 |
     \nobracket \partial_x Y^R (t - s, \gamma_s) |^2 \mathd s
     \right)}{{\mathbb{E}_{\mathbb{P}_{t, x}}}  \left[ \exp \left(
     \overline{h} (\gamma_t) - Y (0, \gamma_t) + \int^t_0 | \nobracket
     \partial_x Y^R (t - s, \gamma_s) |^2 \mathd s \right) \right]}, \]
  see Proposition 2.5 in {\cite{dupuis}}.
  
  Now consider, for our $\mathbb{P}_{t, x},$the process $(X_s)_{s \in [0, t]}$
  and the basis $(\tilde{\Omega}, \widetilde{\mathcal{F}},
  (\widetilde{\mathcal{F}}^X_s)_{s \in [0, t]}, \widetilde{\mathbb{P}})$
  provided by Theorem \ref{dirichletdecomp}; we use the $\tilde{\cdummy}$
  notation to clarify that $\widetilde{\mathbb{P}}$ and
  $\widetilde{\mathbb{Q}}$ may be probability measures on a different
  measurable space than the canonical path space. Then we are done with the
  $(\geqslant)$ inequality, if for the measure
  \[ \frac{\mathd \widetilde{\mathbb{Q}}}{\mathd \widetilde{\mathbb{P}}}  =
     \frac{\exp \left( \overline{h} (X_t) - Y (0, X_t) + \int^t_0 | \nobracket
     \partial_x Y^R (t - s, X_s) |^2 \mathd s
     \right)}{{\mathbb{E}_{\widetilde{\mathbb{P}}}}  \left[ \exp \left(
     \overline{h} (X_t) - Y (0, X_t) + \int^t_0 | \nobracket \partial_x Y^R (t
     - s, X_s) |^2 \mathd s \right) \right]}, \]
  we can find a corresponding drift $v$ such that under
  $\widetilde{\mathbb{Q}}$, the process $(X_s)$ solves the $v$-martingale
  problem. To that end, define the bounded
  $(\widetilde{\mathcal{F}}_s^X)$-martingale
  \[ R_s = \mathbb{E} \left[ \left. \frac{\mathd
     \widetilde{\mathbb{Q}}}{\mathd \widetilde{\mathbb{P}}} \right|
     \widetilde{\mathcal{F}}_s^X \right] . \]
  By Proposition \ref{martingalerep}, there exists a progressively measurable
  $u$ for which $\mathbb{E}_{\widetilde{\mathbb{P}}} \left[ \int^t_0 u^2_s
  \mathd s \right] < \infty$ and
  \[ R_s = 1 + \int^s_0 u_u \mathd W_u . \]
  Observing that $R_s \geqslant c > 0$, again thanks to the boundedness of
  $h$, $Y$ and $\partial_x Y^R$, we can write for $\nu_s = u_s / R_s$
  \[ R_s = 1 + \int^s_0 \nu_u R_u \mathd W_u, \]
  and since
  \[ \mathbb{E}_{\widetilde{\mathbb{P}}} \left[ \int^t_0 \nu^2_s \mathd s
     \right] = \mathbb{E}_{\widetilde{\mathbb{P}}} \left[ \int^t_0
     \frac{u^2_s}{R^2_s} \mathd s \right] \leqslant c^{- 2}
     \mathbb{E}_{\widetilde{\mathbb{P}}} \left[ \int^t_0 u^2_s \mathd s
     \right] < \infty, \]
  we conclude that necessarily
  \[ R_s = \exp \left( \int^s_0 \nu_u \mathd W_u - \frac{1}{2} \int^s_0
     \nu^2_u \mathd u \right) . \]
  As $H (\widetilde{\mathbb{Q}} | \widetilde{\mathbb{P}}
  \nobracket)$=$\mathbb{E}_{\widetilde{\mathbb{Q}}} \left[ \frac{1}{2}
  \int^t_0 \nu^2_s \mathd s \right]$ (see the first part of the proof of
  Theorem 3.1 in {\cite{dupuis}}), all that remains to see is that under
  $\widetilde{\mathbb{Q}}$, the process $(X_s)_{s \in [0, t]}$ solves the
  $\nu$-martingale problem. For $\varphi$ and $f$ as in \eqref{paraPDE}, we
  know that
  \[ \varphi (s, X_s) - \int^s_0 f (u, X_u) \mathd u, \quad s \in [0, t], \]
  is a martingale under $\widetilde{\mathbb{P}}$. We also have $\varphi \in C
  _t  \mathcal{C}^{2 - \alpha} (\mathbb{T})$, so by Corollary
  \ref{itoformula},
  \[ \left\langle \varphi (\cdummy, X_.), \int^._0 \nu_u \mathd W_u
     \right\rangle_s = \int^s_0 \partial_x \varphi (u, X_u) \nu_u \mathd u, \]
  and by Girsanov's theorem
  \[ \varphi (s, X_s) - \int^s_0 (f (u, X_u) + \partial_x \varphi (u, X_u)
     \nu_u) \mathd u, \quad s \in [0, t], \]
  is a martingale under $\widetilde{\mathbb{Q}}$, as we wanted.
  
  The reverse inequality can be argued similarly by noting that for the
  optimal $v$ existence and uniqueness in law of solutions hold for the
  associated singular diffusion.
\end{proof}

\subsubsection{Zvonkin's transform}

We have seen thus far that, as in the case of smooth coefficients, we can
represent the solution to the KPZ equation by a stochastic control problem. In
order to derive similar bounds in our singular setting, we still require
estimates for the transition density of
\[ \gamma_s = x + \int^s_0 (\partial_x Y + \partial_x Y^{\vee} + \partial_x
   Y^R) (t - u, \gamma_u) \mathd u + B_s, \]
for fixed $t \in [0, T]$. Similarly as in {\cite{zhang2017heat}}, this is
where Zvonkin's transform comes into play. For the Zvonkin transform we need
to solve (paracontrolled) PDEs of the type
\begin{equation}
  \left( \partial_t + \frac{\Delta}{2} + b \partial_x \right) u_{\lambda} = f
  + \lambda u_{\lambda}, \quad u (t) = 0, \label{paraPDE2}
\end{equation}
where $b \in C_t  \mathcal{C}^{- \alpha} (\mathbb{T})$ and $f \in C_t
L^{\infty} (\mathbb{T}) \cup \{ - b \}$. The first step is to check that for
$\lambda$ big enough we can make $\| u_{\lambda} \|_{C^1 (\mathbb{T})}$ as
small as we want.

\begin{lemma}
  $\left( \cite{zhangHJB}, \tmop{Lemma} 3.4 \right) .$\label{schauder} Let
  $\alpha \in \left( \frac{1}{2}, \frac{2}{3} \right)$ and $\theta \in \left(
  1 + \frac{3 \alpha}{2}, 2 \right)$. If $b$, $f$ and $u_{\lambda}$ are as in
  \eqref{paraPDE}, there exists a $\lambda_0 > 0$ such that, for every
  $\lambda \geqslant \lambda_0,$
  \[ \| u_{\lambda} \|_{C_t  \mathcal{C}^{\theta - \alpha}} + \| u_{\lambda}
     \|_{C^{(\theta - \alpha) / 2}_t L^{\infty}} \leqslant (1 \vee
     \lambda)^{\frac{\theta}{2} - 1} c (\alpha, \theta, \| b \|_{C_t 
     \mathcal{C}^{- \alpha}}, \| f \|_{C_t  \mathcal{C}^{- \alpha}}, \| b
     \odot \nabla \mathcal{I}f \|_{C_t  \mathcal{C}^{1 - 2 \alpha}}), \]
  where $b \odot \nabla \mathcal{I}f$ should be formally understood as a limit
  of smooth approximations $b_n \odot \nabla \mathcal{I}f_n$ in $C_t 
  \mathcal{C}^{1 - 2 \alpha} (\mathbb{T})$.
\end{lemma}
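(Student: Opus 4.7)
The plan is to recast \eqref{paraPDE2} as a fixed-point equation
\[
u_\lambda = -\mathcal{G}_\lambda(b\partial_x u_\lambda + f), \qquad \mathcal{G}_\lambda \assign (\partial_t + \tfrac{1}{2}\Delta - \lambda)^{-1},
\]
(with terminal condition $u_\lambda(t)=0$) and to combine parabolic Schauder estimates for $\mathcal{G}_\lambda$ with paracontrolled calculus so as to give a rigorous meaning to the product $b\partial_x u_\lambda$ and close the estimate. The quantitative input is the resolvent bound
\[
\|\mathcal{G}_\lambda g\|_{C_t \mathcal{C}^{\sigma+\theta}} + \|\mathcal{G}_\lambda g\|_{C^{\theta/2}_t L^\infty} \lesssim (1\vee\lambda)^{\theta/2-1}\|g\|_{C_t \mathcal{C}^{\sigma}} \quad \text{for } \theta\in[0,2],
\]
which expresses that $\mathcal{G}_\lambda$ can gain up to two parabolic derivatives and that unused derivatives are converted into a negative power of $\lambda$. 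This is ultimately the source of the factor $(1\vee\lambda)^{\theta/2-1}$ in the claim.

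Next, I would apply Bony's decomposition $b\partial_x u_\lambda = b\prec \partial_x u_\lambda + b\succ\partial_x u_\lambda + b\odot \partial_x u_\lambda$. The two paraproducts are unambiguously controlled by standard Besov mapping properties, so only the resonant term is problematic: for $u_\lambda\in C_t\mathcal{C}^{\theta-\alpha}$ the sum of regularities is $-\alpha+(\theta-\alpha-1)<0$, forced by $\theta<2$ and $\alpha>1/2$. To cure this, I would postulate the paracontrolled ansatz
\[
u_\lambda = \partial_x u_\lambda \prec \mathcal{G}_\lambda(-b) + u_\lambda^\sharp,
\]
with $u_\lambda^\sharp$ a higher-regularity remainder expected to live in $C_t\mathcal{C}^{\theta}$, and apply the Gubinelli--Imkeller--Perkowski commutator lemma to rewrite
\[
b\odot \partial_x u_\lambda = \partial_x u_\lambda\cdot \bigl(b\odot \partial_x\mathcal{G}_\lambda(-b)\bigr) + \mathrm{Comm} + b\odot u^\sharp_\lambda,
\]
where the leading datum is the hypothesised enhancement $b\odot\nabla\mathcal{I}f$, the commutator is bounded by the standard trilinear estimate, and $b\odot u^\sharp_\lambda$ is a classical product thanks to $\theta>2\alpha$---precisely what the assumption $\theta>1+3\alpha/2$ is designed to guarantee once the derivative lost through $\partial_x u_\lambda$ in the ansatz is accounted for.

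Plugging the ansatz back into $u_\lambda = -\mathcal{G}_\lambda(f + b\partial_x u_\lambda)$ and isolating $u_\lambda^\sharp$ yields a closed linear system for the pair $(u_\lambda, u_\lambda^\sharp)$, which I would solve by a Neumann iteration: every application of $\mathcal{G}_\lambda$ produces a factor $(1\vee\lambda)^{\theta/2-1}$ with strictly negative exponent, so for $\lambda\geq\lambda_0$ large enough the fixed-point map is a strict contraction and the unique fixed point satisfies the advertised bound, with the constant depending on $\|b\|_{C_t\mathcal{C}^{-\alpha}}$, $\|f\|_{C_t\mathcal{C}^{-\alpha}}$ and $\|b\odot\nabla\mathcal{I}f\|_{C_t\mathcal{C}^{1-2\alpha}}$. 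The main obstacle is the bookkeeping of $\lambda$-powers across the resonant term: the exponents produced by both the commutator and the classical product $b\odot u^\sharp_\lambda$ must be verified to be no worse than $\theta/2-1$, and the admissible window $\theta\in(1+3\alpha/2,2)$ is precisely where this balance closes---the lower bound giving enough smoothness to $u_\lambda^\sharp$ for the resonant product, and the upper bound keeping the resolvent gain strictly negative so that $\lambda_0$ can actually be chosen.
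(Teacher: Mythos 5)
This lemma is stated in the paper as a citation (Lemma~3.4 of \cite{zhangHJB}) and is \emph{not} proved there, so there is no in-paper argument to compare against; what follows evaluates your sketch on its own terms. The overall architecture---rewrite \eqref{paraPDE2} as a fixed point for the damped resolvent $\mathcal{G}_\lambda$, exploit the interpolation-type Schauder bound $(1\vee\lambda)^{\theta/2-1}$, split $b\,\partial_x u_\lambda$ by Bony decomposition, handle the resonant term via a paracontrolled ansatz and the GIP commutator, and close by a Neumann/contraction argument for $\lambda$ large---is exactly the expected route and is sound. A few points need tightening. First, the fixed-point relation should read $u_\lambda=\mathcal{G}_\lambda(f-b\,\partial_x u_\lambda)$; your minus sign is misplaced but inconsequential. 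Second, the remainder term you must estimate is $b\odot\partial_x u^\sharp_\lambda$, not $b\odot u^\sharp_\lambda$; with $u^\sharp_\lambda\in C_t\mathcal{C}^\theta$ the classical-product condition is $\theta-1-\alpha>0$, which is indeed implied by $\theta>1+3\alpha/2$, but the binding constraint giving $1+3\alpha/2$ comes from further down the bookkeeping (the regularity one can actually propagate to $u^\sharp_\lambda$ once the paraproduct $\partial_x u_\lambda\prec\cdot$ costs a derivative), and your sketch does not make that step visible. Third, and most substantively, your ansatz builds $u_\lambda$ against $\mathcal{G}_\lambda(-b)$, whose resonant product with $b$ is a $\lambda$-dependent object, whereas the hypothesised enhancement is the $\lambda$-independent $b\odot\nabla\mathcal{I}f$; you need an additional (standard but nontrivial) argument showing that $b\odot\partial_x\mathcal{G}_\lambda(-b)-b\odot\nabla\mathcal{I}(-b)$ is controlled uniformly in $\lambda$, and indeed gains a negative power of $\lambda$ so that it does not spoil the contraction. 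Your proposal asserts this implicitly (``the leading datum is the hypothesised enhancement'') but this is precisely the step that must be checked, and it is the one place where a naive reading of the sketch would leave a gap.
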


We are ready to prove the main result of the section.

\begin{theorem}
  $(\tmop{Zvonkin}' s \tmop{transfom})$\label{zvonkintrans}. Let $f = - b$ in
  \eqref{paraPDE}. Then, there exists $\lambda_0 > 0$ such that for all
  $\lambda \geqslant \lambda_0$ and $s \in [0, t]$, the function
  $\Phi_{\lambda} (s, y) = y + u_{\lambda} (s, y)$ is a $C^1$-diffeomorphism
  of $\mathbb{R}$. Furthermore, for any such $\lambda$ and under
  $\mathbb{P}_{t, x}$, the process $Y_s \assign \Phi_{\lambda} (s, \gamma_s)$
  solves
  \begin{equation}
    Y_s = \Phi_{\lambda} (0, x) + \int^s_0 \tilde{b} (u, Y_u) \mathd u +
    \int^s_0 \tilde{\sigma} (u, Y_u) \mathd B_u, \label{zvonkinSDE}
  \end{equation}
  where $\tilde{b} (s, y) = \lambda u_{\lambda} (s, \Phi^{- 1}_{\lambda} (s,
  y))$ and $\tilde{\sigma} (s, y) = \nabla \Phi_{\lambda} (s,
  \Phi_{\lambda}^{- 1} (s, y))$.
\end{theorem}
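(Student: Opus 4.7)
\emph{Diffeomorphism.} By Lemma \ref{schauder} applied with $f = -b \in C_t \mathcal{C}^{-\alpha}$, for any $\theta \in (1 + 3\alpha/2, 2)$ we have $\|u_\lambda\|_{C_t \mathcal{C}^{\theta-\alpha}} \lesssim (1 \vee \lambda)^{\theta/2 - 1}$. Since $\theta - \alpha > 1$, the embedding $\mathcal{C}^{\theta-\alpha}(\mathbb{T}) \hookrightarrow C^1(\mathbb{T})$ together with the negative exponent $\theta/2 - 1 < 0$ gives $\sup_{s \in [0,t]}\|\partial_y u_\lambda(s,\cdot)\|_{L^\infty} \to 0$ as $\lambda \to \infty$. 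Picking $\lambda_0$ so that this supremum is at most $1/2$ for all $\lambda \geq \lambda_0$, one has $\partial_y \Phi_\lambda(s,\cdot) = 1 + \partial_y u_\lambda(s,\cdot) \in [1/2, 3/2]$ uniformly in $s$, so $\Phi_\lambda(s, \cdot)$ is a $C^1$-diffeomorphism of $\mathbb{R}$ onto itself.

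\emph{SDE for $Y$.} The key algebraic observation is that, while $u_\lambda$ solves a PDE with distributional right-hand side, the shifted function $\Phi_\lambda = \mathrm{id} + u_\lambda$ solves a PDE with continuous right-hand side: viewing the identity on $\mathbb{R}$ as a function $y$ with $(\partial_t + \frac{1}{2}\Delta + b\partial_x)y = b$, we get
\[
\left(\partial_t + \frac{1}{2}\Delta + b\partial_x\right)\Phi_\lambda
= b + (-b + \lambda u_\lambda) = \lambda u_\lambda \in C_t C(\mathbb{T}),
\qquad \Phi_\lambda(t, y) = y.
\]
I would then apply an It\^o-type formula, in the spirit of Definition \ref{martingalesol} and Corollary \ref{itoformula}, to obtain
\[
\Phi_\lambda(s, \gamma_s) = \Phi_\lambda(0, x) + \int_0^s \lambda u_\lambda(u, \gamma_u)\,\mathd u + \int_0^s \partial_y \Phi_\lambda(u, \gamma_u)\, \mathd W_u,
\]
where $W$ is the Brownian motion from Theorem \ref{dirichletdecomp}. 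Setting $Y_s := \Phi_\lambda(s, \gamma_s)$ and substituting $\gamma_u = \Phi_\lambda^{-1}(u, Y_u)$ then yields \eqref{zvonkinSDE} with the stated $\tilde b$ and $\tilde \sigma$.

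\emph{Main obstacle.} The hard part is justifying the It\^o formula above: $\Phi_\lambda$ is unbounded in $y$, so Definition \ref{martingalesol} does not apply directly to it, and the individual PDE for $u_\lambda$ has distributional forcing $-b + \lambda u_\lambda$, so Corollary \ref{itoformula} does not apply directly either. My plan is to split $\Phi_\lambda(s, \gamma_s) = \gamma_s + u_\lambda(s, \gamma_s)$, use the Dirichlet decomposition $\gamma_s = x + A_s + W_s$ from Theorem \ref{dirichletdecomp} for the first summand, and apply Corollary \ref{itoformula} to the periodic, bounded, $C^{0,1}$-regular function $u_\lambda$ to write $u_\lambda(s, \gamma_s) = u_\lambda(0, x) + \int_0^s \partial_y u_\lambda(u, \gamma_u)\,\mathd W_u + \tilde A_s$ for some $\tilde A$ with zero crossed quadratic variation against local martingales. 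Adding these, the martingale parts combine into $\int_0^s \partial_y \Phi_\lambda(u, \gamma_u)\,\mathd W_u$, and it remains to identify $A_s + \tilde A_s$ as $\int_0^s \lambda u_\lambda(u, \gamma_u)\,\mathd u$. For this I would invoke a smooth approximation $b_n \to b$ in the enhanced-drift space $\mathcal{X}^\alpha$ from Definition \ref{enhanceddrift}: the identification is transparent from the classical It\^o formula applied to the smooth SDE driven by $b_n$, and stability of solutions to the paracontrolled PDE \eqref{paraPDE2} together with stability of the singular martingale problem under such approximations let us pass to the limit.
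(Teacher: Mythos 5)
Your diffeomorphism step matches the paper's, and your algebraic observation — that $\Phi_\lambda = \mathrm{id} + u_\lambda$ satisfies $(\partial_t + \tfrac{1}{2}\Delta + b\partial_x)\Phi_\lambda = \lambda u_\lambda$ with continuous right-hand side, while $u_\lambda$ alone has distributional forcing — is a clean way to see why the Zvonkin transform regularizes. Your extraction of the martingale part via the Dirichlet decomposition $\gamma = x + A + W$ of Theorem~\ref{dirichletdecomp} together with Corollary~\ref{itoformula} applied to the bounded periodic $u_\lambda$ is also fine, and correctly gives $\Phi_\lambda(s,\gamma_s) = \Phi_\lambda(0,x) + \int_0^s \partial_y\Phi_\lambda(u,\gamma_u)\,\mathd W_u + (A_s + \tilde A_s)$.

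The genuine gap is the step you label "the identification is transparent." The identification $A_s + \tilde A_s = \int_0^s \lambda u_\lambda(u,\gamma_u)\,\mathd u$ does not follow from uniqueness of the weak Dirichlet decomposition, since both sides are orthogonal to martingales — uniqueness only separates the martingale part from the rest. And passing the identity $A_s^n + \tilde A_s^n = \int_0^s \lambda u_\lambda^n(u,X^n_u)\,\mathd u$ to the limit is not direct either: the approximating processes $X^n$ only converge to $\gamma$ in law, on a priori different probability spaces, whereas $A$ and $\tilde A$ are defined pathwise on the space where $\gamma$ lives, so you would need a coupling or a Skorokhod-type argument plus tightness and identification of the orthogonal parts. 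This is precisely the difficulty the paper sidesteps by phrasing the whole conclusion at the level of laws: after applying the classical It\^o formula to $\Phi_\lambda^n(s,X_s^n)$, the paper notes that the limit $Y^n \to Y$ holds weakly where $Y$ is the (weakly unique, by Stroock--Varadhan for bounded H\"older coefficients with uniformly elliptic $\tilde\sigma$) solution of \eqref{zvonkinSDE}, and simultaneously $\Phi^n_\lambda(\cdot,X^n_\cdot)\to \Phi_\lambda(\cdot,\gamma_\cdot)$ weakly since $X^n\to\gamma$ weakly and $\Phi^n_\lambda\to\Phi_\lambda$ uniformly; hence $\mathrm{Law}(Y) = \mathrm{Law}(\Phi_\lambda(\cdot,\gamma_\cdot))$, which is all that the downstream heat-kernel corollary needs. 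If you want the stronger pathwise statement you are aiming for, you would need to supply the tightness/identification argument you are currently waving at; otherwise, reorganize around weak uniqueness for \eqref{zvonkinSDE} as the paper does.
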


\begin{proof}
  By Lemma \ref{schauder}, we can choose $\lambda > 0$ large enough so that
  $\sup_{s \in [0, t]} \| \nabla u_{\lambda} (s, \cdummy) \|_{\infty} < 1$/2,
  and hence
  \begin{equation}
    \frac{1}{2} | y - z | \leqslant | \Phi_{\lambda} (s, y) - \Phi_{\lambda}
    (s, z) | \leqslant \frac{3}{2} | y - z |, \quad \forall s \in [0, t],
    \label{lipscond}
  \end{equation}
  so the first claim follows.
  
  For the second one, let $(b_n)_{n \in \mathbb{N}} \subset C_t 
  \mathcal{C}^{\infty} (\mathbb{T})$ satisfy (remember Definition
  \ref{enhanceddrift})
  \[ \begin{array}{cr}
       b_n \rightarrow b & \tmop{in} C_t  \mathcal{C}^{\alpha} (\mathbb{T}),\\
       b_n \odot \nabla \mathcal{J} b_n \rightarrow b \odot \nabla \mathcal{J}
       b & \quad \tmop{in} C_t  \mathcal{C}^{1 - 2 \alpha} (\mathbb{T}),
     \end{array} \]
  and consider $u_{\lambda}^n$, $X^n$ the unique solutions to
  \[ \begin{array}{lr}
       \left( \partial_t + \frac{\Delta}{2} + b_n \nabla \right) u_{\lambda}^n
       = - b_n + \lambda u_{\lambda}^n, & \quad u_{\lambda}^n (t) = 0,\\
       \tmop{dX}_s^n = b_n (s, X_s^n) \mathd s + \mathd B_s, & X_0 = x.
     \end{array} \]
  If we let $\Phi_{\lambda}^n = \tmop{id}_{\mathbb{R}} + u_{\lambda}^n$,
  thanks to Lemma \ref{schauder} (and after possibly increasing $\lambda$),
  for big enough $n$ we can apply It{\^o}'s formula to obtain
  \[ \Phi_{\lambda}^n (s, X_s^n) \assign Y_s^n = \Phi^n_{\lambda} (0, x) +
     \int^s_0 \tilde{b}_n (u, Y^n_u) \mathd u + \int^s_0 \tilde{\sigma}_n (u,
     Y^n_u) \mathd B_u, \]
  with $\tilde{b}_n (s, y) = \lambda u^n_{\lambda} (s, (\Phi^n_{\lambda})^{-
  1} (s, y))$ and $\tilde{\sigma}_n (s, y) = \nabla \Phi^n_{\lambda} (s,
  (\Phi_{\lambda}^n)^{- 1} (s, y))$. Due to the uniform convergence of
  $\Phi^n_{\lambda}$, $\tilde{b}_n$ and $\tilde{\sigma}_n$ to
  $\Phi_{\lambda}$, $\tilde{b}$ and $\tilde{\sigma}$, which follows as well
  from Lemma \ref{schauder}, necessarily
  \[ Y^n \rightarrow Y \quad \tmop{weakly}, \]
  where $(Y_s)_{s \in [0, t]}$ is the unique weak solution of SDE
  \eqref{zvonkinSDE} ({\cite{stroock}}, Theorem 8.2.1; since $\nabla
  \Phi^n_{\lambda} = 1 + \nabla u^n_{\lambda}$, we have uniform ellipticity of
  the diffusion coefficient). On the other hand, denoting by
  $\mathcal{F}_{\lambda}^n$ the map
  \[ \begin{array}{llll}
       \mathcal{F}_{\lambda}^n : & C ([0, t] ; \mathbb{R}) & \rightarrow & C
       ([0, t] ; \mathbb{R})\\
       & (\omega (s))_{s \in [0, t]} & \mapsto & (\Phi^n_{\lambda} (s, \omega
       (s)))_{s \in [0, t]},
     \end{array} \]
  the weak convergence of $(X_s^n)_{s \in [0, t]}$ to the solution of the
  singular diffusion (see the proof of Theorem 4.3 in {\cite{chouk}}) and the
  convergence of $\Phi^n_{\lambda}$ guarantees
  \[ \mathcal{F}_{\lambda}^n (X^n) \rightarrow \mathcal{F}_{\lambda} (\gamma)
     \quad \tmop{weakly}, \]
  and therefore $\tmop{Law} (Y) = \tmop{Law} (\mathcal{F}_{\lambda}
  (\gamma))$, as we wanted.
\end{proof}

The last ingredient is the heat kernel estimate for SDE \eqref{zvonkinSDE}.

\begin{proposition}
  \label{propheat}$\left( \cite{gradientest}, \tmop{Theorem} 1.2 \right) .$
  Let $b, \sigma : \mathbb{R}^+ \times \mathbb{R}^n \rightarrow \mathbb{R}^n$
  be bounded, H{\"o}lder continuous in space functions such that $\sigma$ is
  uniformly elliptic. Then, the unique weak solution of
  \[ \mathd X_t = b (t, X_t) \mathd t + \sigma (t, X_t) d B_t, \quad t
     \geqslant 0, X_0 = 0, \]
  admits a transition density function enjoying the following two sided
  Gaussian estimates
  \begin{equation}
    C^{- 1} (\tau - s)^{- n / 2} e^{- c^{- 1} \frac{| x - y |^2}{\tau - s}}
    \leqslant p (s, x ; \tau, y) \leqslant C (\tau - s)^{- n / 2} e^{- c
    \frac{| x - y |^2}{\tau - s}} \label{heatestimates}
  \end{equation}
  for every $x, y \in \mathbb{R}$ and $0 \leqslant s < \tau \leqslant T$,
  where the constants $c, C > 0$ depend only on the regularity of the
  coefficients, the ellipticity constant of $\sigma$, the dimension $n$ and
  the value of $T$. Furthermore, the following gradient estimate also holds
  \begin{equation}
    | \nabla_x p (s, x ; \tau, y) | \leqslant C_1 (\tau - s)^{- 3 / 2} e^{-
    c_1 \frac{| x - y |^2}{\tau - s}} . \label{gradest}
  \end{equation}
\end{proposition}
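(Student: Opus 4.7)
The plan is to prove this via Levi's parametrix method, which is the standard route to Gaussian bounds for non-divergence form parabolic operators with H\"older continuous coefficients (I would follow Friedman's classical treatment in \emph{Partial Differential Equations of Parabolic Type}, adapted to the time-dependent setting). The starting point is the frozen-coefficient Gaussian kernel
\[
Z(s,x;\tau,y;y_0) = \frac{1}{\sqrt{2\pi(\tau-s)\sigma^2(\tau,y_0)}} \exp\!\left(-\frac{(x-y-b(\tau,y_0)(\tau-s))^2}{2(\tau-s)\sigma^2(\tau,y_0)}\right),
\]
the fundamental solution of the operator with coefficients frozen at $(\tau,y_0)$. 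Setting $y_0=y$ and applying the true Kolmogorov backward operator to $Z$ produces an error term $\Psi(s,x;\tau,y)$ whose singularity at $s=\tau$ is strictly weaker than that of $Z$ by a factor $(\tau-s)^{\alpha/2}$, thanks to the H\"older continuity of $b$ and $\sigma$. This is the crucial structural input.

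The fundamental solution is then represented as $p = Z + Z * \Phi$ where $\Phi$ solves the Volterra integral equation $\Phi = \Psi + \Psi * \Phi$. Iterating this equation produces a convergent Neumann series because each convolution of $\Psi$ with itself gains an extra $(\tau-s)^{\alpha/2}$ through a Beta-function type identity applied to the time integrals, while the Gaussian factors combine cleanly via the semigroup property of the frozen-coefficient kernels. The outcome is an upper bound of the form $|\Phi(s,x;\tau,y)| \leq C(\tau-s)^{-1/2 + \alpha/2} e^{-c|x-y|^2/(\tau-s)}$, and plugging this into $p = Z + Z * \Phi$ yields the upper Gaussian bound in \eqref{heatestimates}. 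The gradient estimate \eqref{gradest} comes from the same representation by differentiating under the integral: $\nabla_x Z$ contributes an extra $(\tau-s)^{-1/2}$, and the error term after differentiation remains integrable by the same Beta-function argument.

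For the lower bound, which is the genuinely delicate step, I would use a chaining argument based on Chapman-Kolmogorov. One first proves a \emph{local} lower bound: when $|x-y|^2 \leq K(\tau-s)$ with $K$ small and $\tau-s$ small, the upper bound on $\Phi$ shows that the correction term is dominated by $Z$, so $p \geq \frac{1}{2}Z \geq c(\tau-s)^{-1/2}$ in this diagonal regime. To extend to arbitrary $(x,y)$, partition $[s,\tau]$ into $N \sim 1 + |x-y|^2/(\tau-s)$ subintervals, interpolate points $x = z_0, z_1, \dots, z_N = y$ along the segment, and write
\[
p(s,x;\tau,y) = \int \cdots \int \prod_{i=0}^{N-1} p(r_i, z_i'; r_{i+1}, z_{i+1}') \, dz_1' \cdots dz_{N-1}',
\]
integrating each $z_i'$ over a tube of diameter $\sqrt{(\tau-s)/N}$ around $z_i$. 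Each factor is then in the diagonal regime, so the local lower bound applies; multiplying the contributions reconstructs the Gaussian decay $e^{-c|x-y|^2/(\tau-s)}$ with a constant possibly smaller than in the upper bound.

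The main obstacle is unquestionably the lower bound: the parametrix delivers the upper bound and gradient estimate essentially for free once the H\"older continuity of $\sigma$ is exploited in controlling $\Psi$, but turning this into a matching lower bound requires the chaining construction above and careful bookkeeping of the constants through the Chapman-Kolmogorov iterations, especially to ensure that the number of intermediate steps does not blow up the exponential rate $c$.
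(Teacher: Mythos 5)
This proposition is quoted in the paper directly as Theorem~1.2 of the cited reference \cite{gradientest}; the paper supplies no proof of its own, so there is nothing to compare against internally. Your sketch --- Levi's parametrix with the frozen-coefficient Gaussian $Z$, the Volterra series $p = Z + Z*\Phi$ controlled by the Beta-function gain coming from H\"older continuity, differentiation of the series for the gradient bound, and a Chapman--Kolmogorov chaining argument to upgrade the on-diagonal lower bound to a full Aronson-type lower bound --- is precisely the standard route taken in the literature (Friedman, Aronson, and indeed the cited reference), and it is correct in outline. One small bookkeeping remark: the singularity of the kernel error $\Psi = LZ$ and hence of $\Phi$ in dimension $n$ is $(\tau-s)^{-(n+2-\alpha)/2}$, i.e.\ $(\tau-s)^{-3/2+\alpha/2}$ for $n=1$, not $(\tau-s)^{-1/2+\alpha/2}$ as you wrote; the extra factor of $(\tau-s)^{-1}$ relative to $Z$ comes from the two spatial derivatives and is only partially offset by the H\"older gain. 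This does not affect the convergence of the Neumann series or the final estimates, but the exponent should be tracked correctly to see that $Z*\Phi$ is indeed of lower order than $Z$.
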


The SDE \eqref{zvonkinSDE} satisfies the assumptions of the previous Theorem,
and therefore its transition density $p_Y (s, x ; \tau, y)$ satisfies
estimates of the type \eqref{heatestimates} and \eqref{gradest}. Indeed,
$\tilde{\sigma}$ and $\tilde{b}$ are both bounded and continuous; besides,
\[ \frac{1}{2} \leqslant 1 - \| \nabla u_{\lambda} \|_{C ([0, t] \times
   \mathbb{T})} \leqslant \tilde{\sigma} (s, y) \leqslant 1 + \| \nabla
   u_{\lambda} \|_{C ([0, t] \times \mathbb{T})} \leqslant \frac{3}{2}, \]
and $\tilde{\sigma} (s, y) = \nabla \Phi_{\lambda} (s, \Phi_{\lambda}^{- 1}
(s, y))$ has the required regularity: $\Phi^{- 1}_{\lambda} (s, y) \equiv
(\Phi_{\lambda} (s))^{- 1} (y)$ is Lipschitz continuous in space (due to
\ref{lipscond}) so the H{\"o}lder regularity in space of $\tilde{\sigma}$
follows from that of $\nabla \Phi_{\lambda} $ (and similarly for $\tilde{b}$,
which is even more regular).

\begin{corollary}
  \label{singkernelcorol}The transition density $p_b$ of the singular
  diffusion
  \[ \mathd \gamma_s = b (s, \gamma_s) \mathd s + \mathd B_s \]
  satisfies for suitable $c', C' > 0$ and for $t \in [0, T]$,
  \begin{equation}
    {C'}^{- 1} t^{- 1 / 2} e^{{- c'}^{- 1} \frac{| x - y |^2}{2 t}} \leqslant
    p_b (0, x ; t, y) \leqslant C' t^{- 1 / 2} e^{- c' \frac{| x - y |^2}{2
    t}} \label{singkernel}
  \end{equation}
  and also
  \begin{equation}
    | \nabla_x p (0, x ; t, y) | \leqslant C'_1 t^{- 1} e^{- c'_1 \frac{| x -
    y |^2}{t}} . \label{singgradest}
  \end{equation}
\end{corollary}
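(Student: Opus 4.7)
The plan is to transfer the Gaussian heat-kernel bounds for the transformed diffusion $Y$---which satisfies the hypotheses of Proposition~\ref{propheat}, by the coefficient bounds established just above---back to $\gamma$ using the Zvonkin diffeomorphism $\Phi_\lambda$ from Theorem~\ref{zvonkintrans}. Since $\gamma_s = \Phi_\lambda^{-1}(s, Y_s)$ for every $s \in [0, t]$ and $Y_0 = \Phi_\lambda(0, x)$, a direct change of variables yields
\[
  p_b(0, x; t, y) = p_Y(0, \Phi_\lambda(0, x); t, \Phi_\lambda(t, y)) \cdot \nabla \Phi_\lambda(t, y),
\]
and the Jacobian factor is harmless because the choice of $\lambda_0$ enforces $\nabla \Phi_\lambda = 1 + \nabla u_\lambda \in [1/2, 3/2]$.

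The nontrivial step is to compare $|\Phi_\lambda(0, x) - \Phi_\lambda(t, y)|$ with $|x - y|$ inside the Gaussian exponent. From
\[
  \Phi_\lambda(t, y) - \Phi_\lambda(0, x) = (y - x) + \bigl(u_\lambda(t, y) - u_\lambda(0, x)\bigr),
\]
the spatial bound $\|\nabla u_\lambda\|_\infty \leq 1/2$ and the H{\"o}lder-in-time bound $\|u_\lambda\|_{C^{(\theta - \alpha)/2}_t L^\infty} < \infty$ from Lemma~\ref{schauder} give
\[
  \frac{1}{2} |x - y| - C t^{(\theta - \alpha)/2} \leq |\Phi_\lambda(0, x) - \Phi_\lambda(t, y)| \leq \frac{3}{2} |x - y| + C t^{(\theta - \alpha)/2}.
\]
Because $\theta > 1 + 3\alpha/2$ forces $\theta - \alpha > 1$, the quantity $t^{\theta - \alpha - 1}$ is bounded on $(0, T]$. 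Splitting into the cases $|x - y| \geq 2 C t^{(\theta - \alpha)/2}$ (where the time correction is absorbed by modifying the Gaussian constant) and $|x - y| < 2 C t^{(\theta - \alpha)/2}$ (where $|x - y|^2 / t \lesssim T^{\theta - \alpha - 1}$, so both exponentials are of order one), one checks that $e^{\pm c |\Phi_\lambda(0, x) - \Phi_\lambda(t, y)|^2 / t}$ is comparable to $e^{\pm c' |x - y|^2 / t}$ up to constants depending only on $T$. Combined with the Jacobian bound, this yields~\eqref{singkernel}.

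For the gradient estimate~\eqref{singgradest} I would differentiate the change-of-variables identity in $x$,
\[
  \nabla_x p_b(0, x; t, y) = \nabla \Phi_\lambda(0, x) \cdot (\partial_{x'} p_Y)(0, \Phi_\lambda(0, x); t, \Phi_\lambda(t, y)) \cdot \nabla \Phi_\lambda(t, y),
\]
apply~\eqref{gradest}, and repeat the Gaussian-exponent comparison of the previous paragraph.

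The main obstacle is the time mismatch between $\Phi_\lambda(0, \cdot)$ and $\Phi_\lambda(t, \cdot)$ in the arguments of $p_Y$: the spatial bilipschitz bound alone is not enough, since even for $x = y$ and small $t$ the difference $|u_\lambda(0, x) - u_\lambda(t, x)|$ could a priori dominate $\sqrt{t}$. This is precisely what the H{\"o}lder-in-time regularity from Lemma~\ref{schauder}, combined with $\theta - \alpha > 1$, rules out.
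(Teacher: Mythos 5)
Your proof is correct and follows essentially the same approach as the paper: apply the change-of-variables formula $p_b(0,x;t,y) = p_Y(0,\Phi_\lambda(0,x);t,\Phi_\lambda(t,y))\,|\nabla\Phi_\lambda(t,y)|$ under the Zvonkin diffeomorphism, bound the Jacobian via $\nabla\Phi_\lambda \in [1/2,3/2]$, and compare the Gaussian exponents using the Lipschitz-in-space and H\"older-in-time regularity of $u_\lambda$ from Lemma~\ref{schauder}, with the analogous differentiated identity for \eqref{singgradest}. The paper compresses the exponent comparison into the single estimate $|\Phi_\lambda(0,x) - \Phi_\lambda(t,y)|^2 \gtrsim |x-y|^2 - t$ (using only the $\tfrac12$-H\"older-in-time bound, which already suffices to make the correction term $O(t)$), whereas your case split and use of the sharper $(\theta-\alpha)/2$ exponent spell out the absorption of the time correction into the Gaussian constant more explicitly, but the underlying argument is the same.
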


\begin{proof}
  By Theorem \ref{zvonkintrans},
  \[ p_b (0, x ; t, y) = p_Y (0, \Phi_{\lambda} (0, x) ; t, \Phi_{\lambda}
     (t, y)) | \nabla \Phi_{\lambda} (t, y) | . \label{kernelexp} \]
  On one hand, we have that
  \[ \frac{1}{2} \leqslant | \nabla \Phi_{\lambda} (t, y) | \leqslant
     \frac{3}{2}, \]
  and on the other, thanks to the Lipschitz continuity in space and
  $\frac{1}{2}$-H{\"o}lder continuity in time of $\Phi_{\lambda}$,
  \[ \begin{array}{lll}
       | \Phi_{\lambda} (0, x) - \Phi_{\lambda} (t, y) |^2 & \gtrsim & |
       \Phi_{\lambda} (t, x) - \Phi_{\lambda} (t, y) |^2 - | \Phi_{\lambda}
       (0, x) - \Phi_{\lambda} (t, x) |^2\\
       & \gtrsim & | x - y |^2 - t,
     \end{array} \]
  and thus the result follows from Proposition \ref{propheat}. Noting that
  \[ \nabla_x p_b (0, x ; t, y) = \nabla_x p_Y (0, \Phi_{\lambda} (0, x) ; t,
     \Phi_{\lambda} (t, y)) \nabla \Phi_{\lambda} (0, x) | \nabla
     \Phi_{\lambda} (t, y) |, \]
  \eqref{singgradest} is obtained similarly.
  
  \ 
\end{proof}

\subsubsection{Proof of the lower bound}

We have everything we need to prove the main theorem, that is the analogue of
the lower bound in Theorem~\ref{smoothkpz} in the singular case. Since we are
now on the torus, we need to work with the ``periodized'' singular diffusion,
which has transition density
\[ p_{\mathbb{T}, b} (0, x ; t, y) = 2 \pi \sum_{k \in \mathbb{Z}} p_b (0, x ;
   t, y + 2 \pi k), \quad (t, x, y) \in \mathbb{R}^+ \times \mathbb{T} \times
   \mathbb{T}, \]
but we will only use the fact that for $x, y \in \mathbb{T}$ (and
independently of $t$),
\begin{equation}
  p_{\mathbb{T}, b} (0, x ; t, y) \gtrsim p_b (0, x ; t, y) .
  \label{geqkernels}
\end{equation}
\begin{theorem}
  \label{roughcomingup}Let $\bar{h} \in \mathcal{C }^{\beta} (\mathbb{T})$ be
  such that $\bar{h} |_{[- \varepsilon, \varepsilon]} \geq 0 \nobracket$.
  There exists a constant
  \[ C \equiv C (T, Y, Y^{\vee}, Y^R) \]
  for which
  \[ h (t, x) \geq \log \varepsilon - C \left( 1 + \frac{1}{t} \right), \quad 
     (t, x) \in (0, T] \times \mathbb{T} .  \]
\end{theorem}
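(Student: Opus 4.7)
The plan is to mirror the warm-up argument of Theorem~\ref{smoothkpz} almost verbatim, with three replacements: the classical Bou\'e--Dupuis formula is replaced by its singular analogue from Lemma~\ref{singdupuistheorem}, the Brownian bridge measure is replaced by the analogous conditioning of the law $\mathbb{P}_{t,x}$ of the Zvonkin-transformed singular diffusion, and the Gaussian heat kernel estimate is replaced by the two-sided bound \eqref{singkernel} from Corollary~\ref{singkernelcorol}. Concretely, I would first combine Theorem~\ref{varirep} with Lemma~\ref{singdupuistheorem} to rewrite
\[
  (h-Y-Y^\vee-Y^R)(t,x) = \sup_{\mathbb{Q}\ll\mathbb{P}_{t,x}} \left\{ \mathbb{E}_{\mathbb{Q}}\!\left[\bar h(\gamma_t) - Y(0,\gamma_t) + \int_0^t |\partial_x Y^R(t-s,\gamma_s)|^2 \,\mathrm{d}s \right] - H(\mathbb{Q}\,|\,\mathbb{P}_{t,x}) \right\}.
\]

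The natural test measure is $\mathbb{Q}_{t,x} := \mathbb{P}_{t,x}(\,\cdot\,|\,\gamma_t \in [-\varepsilon,\varepsilon])$. For this choice the relative entropy equals $-\log\mathbb{P}_{t,x}(\gamma_t\in[-\varepsilon,\varepsilon])$, and the hypothesis $\bar h|_{[-\varepsilon,\varepsilon]}\geq 0$ yields $\mathbb{E}_{\mathbb{Q}_{t,x}}[\bar h(\gamma_t)]\geq 0$ by construction. Thus the whole argument reduces to a lower bound on $\mathbb{P}_{t,x}(\gamma_t\in[-\varepsilon,\varepsilon])$. Using \eqref{geqkernels} to pass from the periodic kernel $p_{\mathbb{T},b}$ to $p_b$, and then applying the lower bound in \eqref{singkernel}, together with the fact that on the torus $|x-y|\lesssim 1$, I expect
\[
  \mathbb{P}_{t,x}(\gamma_t\in[-\varepsilon,\varepsilon]) \gtrsim \int_{-\varepsilon}^{\varepsilon} p_b(0,x;t,y)\,\mathrm{d}y \gtrsim \varepsilon\, t^{-1/2} e^{-C/t}.
\]
Taking $-\log$ and absorbing the bounded contribution $\tfrac{1}{2}\log t \leq \tfrac12\log T$ into the constant gives $H(\mathbb{Q}_{t,x}\,|\,\mathbb{P}_{t,x}) \leq -\log\varepsilon + C(1+1/t)$, which is exactly the shape of the bound we want to produce.

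The remaining terms in the expectation are harmless: since $Y\in C_T\mathcal{C}^{1/2-}(\mathbb{T})$ and $\partial_x Y^R \in C_T\mathcal{C}^{1/2-}(\mathbb{T})$ are uniformly bounded on $[0,T]\times\mathbb{T}$, the quantity $|-Y(0,\gamma_t)|+\int_0^t|\partial_x Y^R|^2\,\mathrm{d}s$ is bounded by a deterministic constant depending only on $(T,Y,Y^\vee,Y^R)$. Collecting these estimates and moving the uniformly bounded quantity $Y+Y^\vee+Y^R$ at $(t,x)$ to the left-hand side (it contributes another constant) yields the desired inequality $h(t,x) \geq \log\varepsilon - C(1+1/t)$.

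The argument is short because the heavy lifting has already been absorbed upstream: the Dirichlet-process structure of $\gamma$ and Proposition~\ref{martingalerep} feed into Lemma~\ref{singdupuistheorem} to provide a measure-theoretic Bou\'e--Dupuis statement, and the Zvonkin transform together with Proposition~\ref{propheat} provides the Gaussian lower bound needed to control the relative entropy. The only subtle point worth writing out carefully is the passage from $p_{\mathbb{T},b}$ to $p_b$ via \eqref{geqkernels}, which is what makes the diameter of the torus (and not an unbounded $|x|$) appear in the exponent, thereby removing the $1_{|x|>1}x^2$ term that appeared in Theorem~\ref{smoothkpz}.
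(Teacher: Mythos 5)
Your proof is correct and follows the paper's argument essentially step for step: condition $\mathbb{P}_{t,x}$ on $\{\gamma_t\in[-\varepsilon,\varepsilon]\}$ to compute the relative entropy explicitly, use the sign hypothesis on $\bar h$ to drop the initial-data term, and estimate $\mathbb{P}_{t,x}(\gamma_t\in[-\varepsilon,\varepsilon])$ from below via \eqref{geqkernels} and \eqref{singkernel}. One minor wording point: $\mathbb{P}_{t,x}$ is the law of the singular diffusion $\gamma$ itself (the Zvonkin transform only enters to produce the heat-kernel bounds for it), not of its image under $\Phi_\lambda$.
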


\begin{proof}
  By \eqref{realformula} and \eqref{singdupuis}, we have as in Theorem
  \ref{smoothkpz} that
  \[ \begin{array}{lll}
       h (t, x) & \geqslant & - \| Y + Y^{\vee} + Y^R \|_{\infty} - \| Y
       \|_{\infty} - \int^T_0 \| \partial_x Y^{^R} (s) \|_{C_b (\mathbb{T})}^2
       \mathd s + \log \mathbb{P}_{t, x} (\gamma_t \in [- \varepsilon,
       \varepsilon]) .
     \end{array} \]
  Now, arguing as in Theorem \ref{smoothkpz}
  \[ \begin{array}{rcl}
       \log \mathbb{P}_{t, x} (\gamma_t \in [- \varepsilon, \varepsilon]) &
       \geqslant & \log \int^{\varepsilon}_{- \varepsilon} p_{\mathbb{T}, b}
       (0, x ; t, y) \mathd y \\& \overset{\eqref{geqkernels}}{\gtrsim} & \log
       \int^{\varepsilon}_{- \varepsilon} p_b (0, x ; t, y) \mathd y\\
       &  \overset{\eqref{singkernel}}{\gtrsim}& \log \frac{1}{2
       \varepsilon} \int^{\varepsilon}_{- \varepsilon} {2 \varepsilon C'}^{-
       1} t^{- 1 / 2} e^{{- c'}^{- 1} \frac{| x - y |^2}{t}} \mathd y\\
       & \overset{x \in \mathbb{T}, t \in [0, T]}{\gtrsim} & \log
       \varepsilon - C_T ( 1 + \frac{1}{t} ),
     \end{array} \]
  as we wanted.
\end{proof}

\section{Modulus of continuity}

With the same idea, we may prove one more interesting result: that the modulus
of continuity of the KPZ equation comes down from infinity, without any
quantitative requirements on the initial condition. This is the content of
Theorems \ref{oscitheorem} and \ref{roughoscitheorem}.

\subsection{Warm-up: The case without forcing}

In the following Theorem we denote by $\mathcal{N}_{\mathbb{T}} (t, x)$ the
law of the Brownian motion on the torus starting from $x \in \mathbb{T}$, at
time $t$ (i.e. the nornal distribution on the torus with variance $t$).

\begin{theorem}
  \label{oscitheorem}Let $\beta > 0$ and $\bar{h} \in \mathcal{C}^{\beta}
  (\mathbb{T})$ and $\xi = 0$. Then, with $C_T$ independent of $\bar{h}$,
  \begin{equation}
    | h (t, x) - h (t, 0) | \leqslant C_T \left( 1 + \frac{1}{t} \right) | x
    |, \quad (t, x) \in (0, T] \times \mathbb{T} . \label{oscillation}
  \end{equation}
\end{theorem}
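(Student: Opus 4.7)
The strategy is to use the variational representation of $h$ (which in this smooth, $\xi = 0$ setting amounts to the Cole-Hopf identity) and reduce the oscillation estimate to a Lipschitz bound on the logarithm of the torus heat kernel.

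Applying the Bou\'e-Dupuis formula to Lemma \ref{smoothrep} and exploiting that the terminal cost $\bar{h}(X_t^v)$ depends only on the endpoint -- whose law under $\mathbb{P}_x$ is $\mathcal{N}_{\mathbb{T}}(t,x)$ -- yields the static representation
\[
  h(t,x) \;=\; \sup_{\mu}\left\{ \int_{\mathbb{T}} \bar{h}\,d\mu \;-\; H(\mu \mid \mathcal{N}_{\mathbb{T}}(t,x)) \right\},
\]
with supremum over probability measures $\mu$ on $\mathbb{T}$ and unique optimizer $\mu^*_x(dy) \propto e^{\bar{h}(y)}\,d\mathcal{N}_{\mathbb{T}}(t,x)(y)$ (equivalently, $h(t,x) = \log\int e^{\bar{h}}\,d\mathcal{N}_{\mathbb{T}}(t,x)$ is Cole-Hopf). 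Using $\mu^*_x$ as a (suboptimal) candidate measure in the variational formula for $h(t,0)$, and applying the chain rule $H(\mu\mid\nu_1) - H(\mu\mid\nu_2) = \int \log(d\nu_2/d\nu_1)\,d\mu$, gives
\[
  h(t,x) - h(t,0) \;\leq\; \int_{\mathbb{T}} \log \frac{d\mathcal{N}_{\mathbb{T}}(t,x)}{d\mathcal{N}_{\mathbb{T}}(t,0)}(y)\,d\mu^*_x(y) \;\leq\; \sup_{y \in \mathbb{T}} \left| \log \frac{d\mathcal{N}_{\mathbb{T}}(t,x)}{d\mathcal{N}_{\mathbb{T}}(t,0)}(y) \right|,
\]
and the reverse inequality follows by exchanging the roles of $x$ and $0$. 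This step eliminates the dependence on $\bar{h}$ entirely.

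It remains to prove the heat-kernel Lipschitz estimate $\sup_y |\partial_x \log p^{\mathbb{T}}_t(x,y)| \leq C_T(1 + 1/t)$ uniformly in $x \in \mathbb{T}$, $t \in (0,T]$, where $p^{\mathbb{T}}_t(x,y)$ is the density of $\mathcal{N}_{\mathbb{T}}(t,x)$. By translation invariance this reduces to bounding $|q_t'(z)/q_t(z)|$ with $q_t(z) = \sum_{k \in \mathbb{Z}} (2\pi t)^{-1/2} e^{-(z+k)^2/(2t)}$. For small $t$ the sum is dominated by the single Gaussian whose centre realises $d_{\mathbb{T}}(z,0) \leq 1/2$, giving $|q_t'/q_t|(z) \leq d_{\mathbb{T}}(z,0)/t + O_T(1) \leq C/t$; at the antipodal point $z = 1/2$ two Gaussian terms contribute equally but $q_t'(1/2) = 0$ by symmetry, so the estimate stays uniform in $z$. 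For $t \gtrsim 1$ the Fourier expansion $q_t(z) = 1 + 2\sum_{k \geq 1} e^{-2\pi^2 k^2 t}\cos(2\pi k z)$ shows $q_t \geq 1/2$ while $|q_t'|$ decays exponentially, so $|q_t'/q_t| \leq C$. Integrating along a torus geodesic from $0$ to $x$ yields the claim.

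The main obstacle is the heat kernel gradient bound, specifically arranging the estimate so that both regimes -- the small-$t$ Gaussian one (producing the $1/t$ term) and the large-$t$ near-uniform one (producing the $O(1)$ term) -- combine smoothly; this two-regime structure is what forces the $1 + 1/t$ scaling. I note that exactly this template will later adapt to the rough setting of Theorem \ref{roughoscitheorem}, where the variational representation of Theorem \ref{varirep} plus Lemma \ref{singdupuistheorem} replaces the step above, and the heat kernel estimate for the singular diffusion from Corollary \ref{singkernelcorol} replaces the Gaussian computation here.
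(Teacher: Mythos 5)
Your argument matches the paper's essentially step for step: reduce via Boué--Dupuis to a variational formula over endpoint laws, plug the optimizer for $h(t,x)$ into the formula for $h(t,0)$, use the chain rule for relative entropy to collapse the difference to $\mathbb{E}_{\mu^*_x}\bigl[\log\frac{d\mathcal{N}_{\mathbb{T}}(t,x)}{d\mathcal{N}_{\mathbb{T}}(t,0)}\bigr]$, and close with a Lipschitz bound on the log of the torus heat kernel. The only (minor) difference is that the paper obtains the gradient estimate $\|\partial_x p_{\mathbb{T}}(t,\cdot)/p_{\mathbb{T}}(t,\cdot)\|_{\infty}\lesssim_T 1/t$ by citing Lemma~\ref{heattorus} and Corollary~\ref{cor:gradest}, whereas you sketch it directly via the wrapped-Gaussian/Fourier two-regime argument; both are fine and the conclusions agree up to the equivalence of $1/t$ and $1+1/t$ on $(0,T]$.
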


\begin{proof}
  We have
  \[ \begin{array}{lll}
       h (t, x) - h (t, 0) & = & \underset{\mathbb{Q} \ll \mathbb{P}_{t,
       x}}{\sup} \left\{ \mathbb{E}_{\mathbb{Q}} \left[ \overline{h} (B_t)
       \right] - H (\mathbb{Q} | \mathbb{P}_x \nobracket) \right\} -
       \underset{\mathbb{Q} \ll \mathbb{P}_{t, 0}}{\sup} \left\{
       \mathbb{E}_{\mathbb{Q}} \left[ \overline{h} (B_t) \right] - H
       (\mathbb{Q} | \mathbb{P}_0 \nobracket) \right\},
     \end{array} \]
  where now $\mathbb{P} \mathbb{}_{t, 0} =\mathcal{N}_{\mathbb{T}} (t, 0)
  \infixand \mathbb{P}_{t, x} =\mathcal{N}_{\mathbb{T}} (t, x)$. Let us denote
  by $\mathbb{Q}_{t, x}$ the optimizer of the first supremum (which exists and
  is explicitly given by the Radon-Nikodym density $\frac{e^{\overline{h}
  (B_t)}}{\mathbb{E}_{\mathbb{P}_{t, x}} \left[ e^{\overline{h} (B_t)}
  \right]}$). Since $\mathbb{P}_{t, 0} \sim \mathbb{P}_{t, x}$, we can also
  choose $\mathbb{Q}_{t, x}$ in the second supremum to obtain
  \[ \begin{array}{lll}
       h (t, x) - h (t, 0) & \leqslant & \mathbb{E}_{\mathbb{Q}_{t, x}} \left[
       \overline{h} (B_t) \right] - \mathbb{E}_{\mathbb{Q}_{t, x}} \left[
       \overline{h} (B_t) \right] + H (\mathbb{Q}_{t, x} | \mathbb{P}_{t, 0}
       \nobracket) - H (\mathbb{Q}_{t, x} | \mathbb{P}_{t, x} \nobracket)\\
       & = & H (\mathbb{Q}_{t, x} | \mathbb{P}_{t, 0} \nobracket) - H
       (\mathbb{Q}_{t, x} | \mathbb{P}_{t, 0} \nobracket) -
       \mathbb{E}_{\mathbb{Q}_{t, x}} \left[ \log \frac{\mathd \mathbb{P}_{t,
       0}}{\mathd \mathbb{P}_{t, x}} \right] .
     \end{array} \]
  We know that $\frac{\mathd \mathbb{P}_{t, 0}}{\mathd \mathbb{P}_{t, x}} (y)
  = \frac{\mathd \mathcal{N}_{\mathbb{T}} (t, 0)}{\mathd
  \mathcal{N}_{\mathbb{T}} (t, x)} (y) = \frac{p_{\mathbb{T}} (t,
  y)}{p_{\mathbb{T}} (t, y - x)}$, and
  \begin{equation}
    \begin{array}{l}
      | \log p_{\mathbb{T}} (t, y) - \log p_{\mathbb{T}} (t, y - x) |
      \leqslant \left\| \frac{\partial_x p_{\mathbb{T}} (t,
      \cdummy)}{p_{\mathbb{T}} (t, \cdummy)} \right\|_{\infty} | x | \leqslant
      C_T \frac{| x |}{t},
    \end{array}
  \end{equation}
  where the second inequality follows from Corollary \ref{cor:gradest} below.
  Since $h (t, x) - h (t, 0) > 0$ without loss of generality (otherwise
  exchange the role of $x$ and $0$), we deduce that
  \[ | h (t, x) - h (t, 0) | \leqslant \mathbb{E}_{\mathbb{Q}_{t, x}} \left[
     \left| \log \frac{\mathd \mathbb{P}_{t, 0}}{\mathd \mathbb{P}_{t, x}}
     \right| \right] \leqslant C_T \frac{| x |}{t} . \]
\end{proof}

In the proof we used the following results for the periodic heat kernel:

\begin{lemma}
  $\left( \cite{heattorus}, \tmop{Theorem} 2.3 \right) .$\label{heattorus}The
  heat kernel on the torus
  \[ \begin{array}{llll}
       p_{\mathbb{T}} : & \mathbb{R}^+ \times \mathbb{R} & \rightarrow &
       \mathbb{R}\\
       & (s, y) & \mapsto & p_{\mathbb{T}} (s, y) \assign 2 \pi \sum_{k \in
       \mathbb{Z}} \frac{1}{\sqrt{2 \pi s}} e^{- \frac{| y + 2 \pi k |^2}{2
       s}}
     \end{array} \]
  (or more precisely, its $2 \pi$-periodic extension) satisfies
  \[  \sqrt{\frac{2 \pi}{s}} \exp \left( - \frac{| y |_{\mathbb{T}}^2}{2 s}
     \right) \leqslant p_{\mathbb{T}} (s, y) \leqslant 2 \left( 1 +
     \sqrt{\frac{2 \pi}{s}} \right) \exp \left( - \frac{| y
     |_{\mathbb{T}}^2}{2 s} \right), \]
  where $| y |_{\mathbb{T}} \assign \underset{k \in \mathbb{Z}}{\inf}  | y - 2
  \pi k |$.
\end{lemma}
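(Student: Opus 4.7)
The plan is to prove the two-sided bounds for $p_{\mathbb{T}}(s,y) = \sqrt{2\pi/s}\sum_{k \in \mathbb{Z}} e^{-|y+2\pi k|^2/(2s)}$ by a direct comparison of each term in the Gaussian sum with the dominant ($k=k^\ast$) one. First I would use the $2\pi$-periodicity and evenness (in $y$) of both sides of the claimed inequality to reduce to the case $y \in [0,\pi]$. In that range $|y|_{\mathbb{T}} = y$ and the minimizer of $|y+2\pi k|$ is $k^\ast = 0$, so keeping only that summand already yields
\[
  p_{\mathbb{T}}(s,y) \;\geq\; \sqrt{\tfrac{2\pi}{s}}\, e^{-y^2/(2s)} \;=\; \sqrt{\tfrac{2\pi}{s}}\, e^{-|y|_{\mathbb{T}}^2/(2s)},
\]
which is the lower bound.

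For the upper bound I would factor out $e^{-y^2/(2s)}$ from the sum and control
\[
  e^{y^2/(2s)} \sum_{k \in \mathbb{Z}} e^{-(y+2\pi k)^2/(2s)} \;=\; \sum_{k \in \mathbb{Z}} e^{-\frac{4\pi k y + 4\pi^2 k^2}{2s}} \;=\; \sum_{k \in \mathbb{Z}} e^{-2\pi k(y+\pi k)/s}.
\]
Splitting the sum according to the sign of $k$, I would use $y \geq 0$ to bound the $k \geq 1$ terms by $e^{-2\pi^2 k^2/s}$, and $y \leq \pi$ to bound the $k = -m \leq -1$ terms by $e^{-2\pi^2 m(m-1)/s}$. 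The $k=0$ and $m=1$ contributions each give the constant $1$, whereas for $m \geq 2$ the elementary inequality $m(m-1) \geq (m-1)^2$ (substituting $j = m-1$) allows me to absorb the negative tail into the $k \geq 1$ sum.

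It then remains to estimate $\sum_{k\geq 1} e^{-2\pi^2 k^2/s}$, which I would do via the standard integral comparison for decreasing summands,
\[
  \sum_{k=1}^{\infty} e^{-2\pi^2 k^2/s} \;\leq\; \int_0^\infty e^{-2\pi^2 z^2/s}\,\mathrm{d}z \;=\; \tfrac{1}{2}\sqrt{s/(2\pi)}.
\]
Putting the pieces together yields $e^{y^2/(2s)} \sum_k e^{-(y+2\pi k)^2/(2s)} \leq 2 + \sqrt{s/(2\pi)}$, and multiplying by $\sqrt{2\pi/s}$ produces the stated upper bound $2(1+\sqrt{2\pi/s})\,e^{-|y|_{\mathbb{T}}^2/(2s)}$ (with a little room to spare).

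There is no real obstacle: the proof is purely a Gaussian-sum exercise, and the only mildly delicate point is handling the $k<0$ tail, which is why the reduction to $y \in [0,\pi]$ (and the resulting asymmetric estimate $-2\pi k(y+\pi k) \leq -2\pi^2 m(m-1)$ rather than $-2\pi^2 m^2$) is needed. One could equivalently argue via Poisson summation, turning the sum into $\sum_m e^{-m^2 s/2} e^{imy}$, but the direct comparison above is the shortest route to the precise constants that appear in the statement.
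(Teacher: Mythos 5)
The paper does not prove this lemma; it is cited verbatim as Theorem~2.3 of \cite{heattorus}, so there is no in-paper argument to compare against. Your direct Gaussian-sum proof is correct and self-contained: the reduction to $y\in[0,\pi]$ by periodicity and evenness is valid, the lower bound by keeping the $k=0$ term is immediate, the splitting of $\sum_k e^{-2\pi k(y+\pi k)/s}$ by sign of $k$ with the bounds $-2\pi k(y+\pi k)\leq -2\pi^2 k^2$ for $k\geq1$ and $\leq -2\pi^2 m(m-1)$ for $k=-m\leq -1$ is exactly the right way to handle the $k<0$ tail, and the integral comparison
\[
\sum_{k\geq1} e^{-2\pi^2 k^2/s} \leq \int_0^\infty e^{-2\pi^2 z^2/s}\,\mathrm{d}z = \tfrac{1}{2}\sqrt{s/(2\pi)}
\]
gives the clean bound $2+\sqrt{s/(2\pi)}$ on the prefactor, hence $p_{\mathbb{T}}(s,y)\leq \bigl(1+2\sqrt{2\pi/s}\bigr)e^{-y^2/(2s)}\leq 2\bigl(1+\sqrt{2\pi/s}\bigr)e^{-y^2/(2s)}$, as required. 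The only stylistic remark: since the prefactor bound you actually derive, $1+2\sqrt{2\pi/s}$, is strictly better than the stated $2(1+\sqrt{2\pi/s})$, you could state the sharper form; and the Poisson-summation route you mention would trade the explicit constants for an argument that is cleaner for small $s$ but less elementary for the lower bound, so the direct comparison you chose is indeed the more economical path to these specific constants.
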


\begin{corollary}
  \label{cor:gradest}For $p_{\mathbb{T}} : [0, T] \times \mathbb{T}
  \rightarrow \mathbb{R}$ the following gradient estimate holds:
  \[ \left\| \frac{\partial_x p_{\mathbb{T}} (t, \cdummy)}{p_{\mathbb{T}} (t,
     \cdummy)} \right\|_{L^{\infty} (\mathbb{T})} \leqslant \frac{C_T}{t} . \]
\end{corollary}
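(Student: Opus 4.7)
The plan is to work directly with the explicit series representation of $p_\mathbb{T}$ provided by Lemma \ref{heattorus}, differentiating termwise and dividing by the matching lower bound. Termwise differentiation is justified by the super-exponential decay in $k$, and, taking $y\in[-\pi,\pi]$ without loss of generality so that $|y|_\mathbb{T}=|y|$, it yields
\[
\partial_y p_\mathbb{T}(t,y)\;=\;-\sqrt{\tfrac{2\pi}{t}}\sum_{k\in\mathbb{Z}}\frac{y+2\pi k}{t}\,e^{-|y+2\pi k|^2/(2t)}.
\]
Bounding $|y+2\pi k|\leq (2|k|+1)\pi$ on each summand and dividing by the lower bound $p_\mathbb{T}(t,y)\geq\sqrt{2\pi/t}\,e^{-y^2/(2t)}$ from Lemma \ref{heattorus} gives
\[
\frac{|\partial_y p_\mathbb{T}(t,y)|}{p_\mathbb{T}(t,y)}\;\leq\;\frac{|y|}{t}+\frac{1}{t}\sum_{k\neq 0}(2|k|+1)\pi\,\exp\!\left(\tfrac{y^2-|y+2\pi k|^2}{2t}\right).
\]

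The key step is to show that the tail sum is uniformly bounded for $t\in(0,T]$. The elementary identity $|y+2\pi k|^2-y^2=4\pi ky+4\pi^2 k^2$, combined with $|4\pi ky|\leq 4\pi^2|k|$ for $|y|\leq\pi$, gives $|y+2\pi k|^2-y^2\geq 4\pi^2|k|(|k|-1)\geq 0$ for every $k\neq 0$. Hence the two $|k|=1$ terms contribute at most $6\pi$ (with trivial exponent), while for $|k|\geq 2$ one has $4\pi^2|k|(|k|-1)\geq 2\pi^2 k^2$, so the exponent is dominated by $-\pi^2 k^2/t\leq -\pi^2 k^2/T$ and the corresponding Gaussian-weighted series is a convergent constant depending only on $T$. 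Adding the $|y|/t\leq\pi/t$ contribution from the $k=0$ term yields the desired $|\partial_y p_\mathbb{T}(t,y)|/p_\mathbb{T}(t,y)\leq C_T/t$.

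The main subtlety lies precisely in the $|k|=1$ terms: their exponent $|y+2\pi k|^2-y^2$ vanishes as $y\to\mp\pi$, so no exponential decay in $t$ is available and the argument relies on the fact that the polynomial prefactor $(2|k|+1)\pi$ is bounded. A tempting ``robust'' estimate of the form $|z|e^{-z^2/(2t)}\leq C\sqrt{t}\,e^{-z^2/(4t)}$ would convert every summand into a Gaussian of variance $2t$, but, after dividing by the lower bound, this would leave a factor $e^{y^2/(4t)}$ that blows up as $t\to 0$; keeping the exponent $|y+2\pi k|^2/(2t)$ intact and exploiting the explicit periodic geometry is what makes the proof go through.
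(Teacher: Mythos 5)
The paper states Corollary~\ref{cor:gradest} without proof (it is presented as a direct consequence of Lemma~\ref{heattorus}), so there is no in-paper argument to compare yours against. Your proof is correct and complete: termwise differentiation of the theta series, the pointwise bound $|y+2\pi k|\leq(2|k|+1)\pi$ on each summand, division by the lower bound $p_{\mathbb T}(t,y)\geq\sqrt{2\pi/t}\,e^{-|y|_{\mathbb T}^2/(2t)}$ from Lemma~\ref{heattorus}, and the exact identity $|y+2\pi k|^2-y^2\geq 4\pi^2|k|(|k|-1)\geq 0$ for $|y|\leq\pi$ together give
\[
\frac{|\partial_y p_{\mathbb T}(t,y)|}{p_{\mathbb T}(t,y)}\leq\frac{1}{t}\Bigl(\pi+6\pi+\sum_{|k|\geq 2}(2|k|+1)\pi\,e^{-\pi^2 k^2/T}\Bigr)=\frac{C_T}{t},
\]
which is exactly the claimed bound, uniformly for $t\in(0,T]$ and $y\in\mathbb T$. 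Your closing remark is also apt: any attempt to trade the factor $|y+2\pi k|$ for a broadened Gaussian would produce, after division by the lower bound, an uncontrolled factor $e^{y^2/(4t)}$ near the endpoints $y\approx\pm\pi$; keeping the $1/(2t)$ exponent intact and using the explicit periodic geometry is what closes the argument. A minor cosmetic alternative for the $|k|=1$ terms is to note that $\partial_y p_{\mathbb T}(t,\pm\pi)=0$ by parity and periodicity, so those terms actually cancel against the $k=0$ contribution near $y=\mp\pi$; but your crude bound of $6\pi$ on them is perfectly adequate and no such refinement is needed.
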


\begin{remark}
  \label{harnack}We can extend this without much work to the case of bounded
  continuous forcing $\xi \in C_b ([0, T] \times \mathbb{T})$. It is also
  possible to prove, by a similar argument, that if $0 < \tau < T$
  \[
     \sup_{\substack{x,y \in \mathbb{T}\\ \tau \leqslant s \leqslant t \leqslant T}} |h(t,x) - h(s,y)| \leqslant C(\xi,\tau,T),
  \]
  independently of the initial condition. Taking $w = e^h$, that is, the
  solution of
  \[ \left( \partial_t - \frac{1}{2} \Delta \right) w = \xi w, \quad w (0) =
     e^{\bar{h}}, \]
  this yields
  \begin{equation}
    \underset{(t, x) \in [\tau, T] \times \mathbb{T}}{\sup} w (t, x) \leqslant
    K (\xi, \tau, T) \underset{(t, x) \in [\tau, T] \times \mathbb{T}}{\inf} w
    (t, x), \label{harnacktorus}
  \end{equation}
  i.e., Harnack's inequality for the heat equation (on the torus). Inequality
  \eqref{harnacktorus} does not hold in general in the whole space  R, but if
  $0 < t_1 < T_1 < t_2 < T_2$ and $M > 0$, it is true that
  \[ \underset{(t, x) \in [t_1, T_1] \times [- M, M]}{\sup} w (t, x) \leqslant
     K (\xi, M, t_1, T_1, t_2, T_2) \underset{(s, y) \in [t_2, T_2] \times [-
     M, M]}{\inf} w (s, y), \]
  which we can also derive as in Theorem \ref{oscitheorem} by noting that if
  $0 < t_1 \leqslant t \leqslant T_1 < t_2 \leqslant s \leqslant T_2$ and $x
  \in [- M, M]$,
  \[ \frac{\mathd \mathcal{N}_{\mathbb{R}} (s, 0)}{\mathd
     \mathcal{N}_{\mathbb{R}} (t, x)} (y) = \frac{p_{\mathbb{R}} (s,
     y)}{p_{\mathbb{R}} (t, y - x)} \geqslant C (M, t_1, T_1, t_2, T_2), \]
  where it is essential that $t \leqslant T_1 < t_2 \leqslant s$.
\end{remark}

\subsection{The case of space-time white noise forcing}

\begin{theorem}
  \label{roughoscitheorem}Let $\bar{h} \in \mathcal{C}^{\beta} (\mathbb{T})$
  for some $\beta > 0$. Then, for any $\alpha \in \left( 0, \frac{1}{2}
  \right)$ and some positive constant
  \[ C \equiv C (\alpha, t, T, Y, Y^{\vee}, Y^R), \]
  the solution of the KPZ equation satisfies
  \begin{equation}
    | h (t, x) - h (t, 0) | \leqslant C | x |^{\alpha}, \quad (t, x) \in (0,
    T] \times \mathbb{T}, \label{roughosci}
  \end{equation}
  independently of the initial condition, where $\sup_{t \in [\varepsilon, T]}
  C (t, T, \alpha, \xi) < \infty$for any $\varepsilon > 0$.
\end{theorem}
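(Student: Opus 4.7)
My plan is to mimic the strategy of Theorem~\ref{oscitheorem}, combining the Bou\'e--Dupuis representation from Lemma~\ref{singdupuistheorem} with the singular-diffusion heat kernel estimates of Corollary~\ref{singkernelcorol}. The first step is to decompose $h = Y + Y^\vee + Y^R + h^R$. The three stochastic data terms have spatial regularity $\mathcal{C}^{1/2-}$, $\mathcal{C}^{1-}$, $\mathcal{C}^{3/2-}$ (Theorem~\ref{varirep}), so
\[ |Y(t,x) - Y(t,0)| + |Y^\vee(t,x) - Y^\vee(t,0)| + |Y^R(t,x) - Y^R(t,0)| \lesssim_{\alpha} |x|^\alpha \]
for any $\alpha \in (0, 1/2)$, with a constant depending only on the noise enhancement. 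This contribution already fixes the H\"older exponent to be strictly less than $1/2$, and what remains is to derive an analogous bound for $h^R$ that is independent of $\bar h$.

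Applying the exponential form of Bou\'e--Dupuis to Lemma~\ref{singdupuistheorem}, I can rewrite
\[ h^R(t,x) = \log \mathbb{E}_{\mathbb{P}_{t,x}}\!\left[\exp F(\gamma)\right], \qquad F(\gamma) = \bar{h}(\gamma_t) - Y(0,\gamma_t) + \int_0^t |\partial_x Y^R(t-s,\gamma_s)|^2\,\mathd s, \]
and similarly for $h^R(t,0)$. Because $\mathbb{P}_{t,x}$ and $\mathbb{P}_{t,0}$ start at different points they are mutually singular on the canonical path space, so the common-optimizer argument of Theorem~\ref{oscitheorem} is not available as stated. To circumvent this I fix $\tau \in (0,t)$ to be chosen and split $F = F_\tau + R_\tau$ with
\[ R_\tau(\gamma) = \int_0^\tau |\partial_x Y^R(t-s,\gamma_s)|^2\,\mathd s \leq C \tau, \]
while $F_\tau$ depends only on $\gamma|_{[\tau,t]}$. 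By the Markov property of the singular diffusion (Theorem~\ref{dirichletdecomp}), the restrictions of $\mathbb{P}_{t,x}$ and $\mathbb{P}_{t,0}$ to $\sigma(\gamma|_{[\tau,t]})$ are mutually absolutely continuous with Radon--Nikodym derivative $p_{\mathbb{T},b}(0,x;\tau,\gamma_\tau)/p_{\mathbb{T},b}(0,0;\tau,\gamma_\tau)$, hence
\[ |h^R(t,x) - h^R(t,0)| \leq 2 C \tau + \sup_{z \in \mathbb{T}}\left|\log \frac{p_{\mathbb{T},b}(0,x;\tau,z)}{p_{\mathbb{T},b}(0,0;\tau,z)}\right|. \]

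The gradient estimate \eqref{singgradest} combined with the lower Gaussian bound \eqref{singkernel} controls the second term by $C_\tau |x|$ for some $C_\tau$ growing polynomially in $1/\tau$. Balancing $C_\tau |x|$ against the $\tau$-error by a suitable choice $\tau = \tau(|x|, \alpha)$ then yields the H\"older estimate for $h^R$; adding the contribution of the stochastic data gives \eqref{roughosci}, and the lower bound $t \geq \varepsilon$ ensures $\tau < t$ throughout and keeps the constant uniform on $[\varepsilon,T]$. The main technical obstacle is that the Gaussian constants appearing in the upper and lower bounds of Corollary~\ref{singkernelcorol} need not coincide, so a naive estimate of $|\log(p_{\mathbb{T},b}(0,x;\tau,z)/p_{\mathbb{T},b}(0,0;\tau,z))|$ on the bounded torus produces a spurious factor $\exp(C/\tau)$; removing it, so that $C_\tau \lesssim 1/\tau$, likely requires a Li--Yau type inequality derived from the Harnack inequality for the Zvonkin-transformed SDE of Theorem~\ref{zvonkintrans}, in the spirit of Corollary~\ref{cor:gradest} for the periodic heat kernel.
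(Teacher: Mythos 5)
Your proposal diverges from the paper's proof in a structurally interesting way, but it contains a genuine gap that you yourself flag at the end.

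The paper's key move is a change of variables that you do not perform: it introduces an auxiliary function $\hat{Y}$ solving a fully‑nonlinear paracontrolled PDE with zero initial datum (depending only on the noise enhancement), and then studies $u := h^R - \hat{Y}$, which satisfies a KPZ‑type equation with \emph{no} running forcing at all. After this transformation the variational representation of $u$ involves only the terminal functional $\bar{h}(\gamma_t) - Y(0,\gamma_t)$, so one can push forward to $\gamma_t$ and compare two laws on $\mathbb{T}$ directly, exactly as in Theorem~\ref{oscitheorem}; the only input needed is that $\sup_y \| \partial_x \tilde{p}(0,\cdot;t,y)/\tilde{p}(0,\cdot;t,y)\|_\infty < \infty$ for the single, fixed time $t \geq \varepsilon$, which the paper obtains by a Besov bootstrap argument ($\tilde{p}(0,\cdot;t,y) \in C^{3/2-}(\mathbb{T})$) plus the Gaussian lower bound. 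No small‑time asymptotics of this ratio are required. You instead keep the running cost $\int_0^t |\partial_x Y^R(t-s,\gamma_s)|^2\,\mathd s$ and try to compensate for the resulting mutual singularity of $\mathbb{P}_{t,x}$ and $\mathbb{P}_{t,0}$ on path space by a time‑splitting at $\tau$ and the Markov property; the trade‑off is an error $O(\tau)$ against a factor $\sup_z |\log(p_{\mathbb{T},b}(0,x;\tau,z)/p_{\mathbb{T},b}(0,0;\tau,z))|$ that must then be controlled \emph{uniformly as $\tau \to 0$}. The reduction itself is correct, but this is exactly where the argument is incomplete: Corollary~\ref{singkernelcorol} gives Gaussian bounds with mismatched constants $c', c'_1$, so the naive ratio is of order $\tau^{-1/2}\exp(C|x-z|^2/\tau)$, which on the torus is $\exp(C/\tau)$ — useless for balancing. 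You are right that one would need a Li--Yau type estimate $|\partial_x p_{\mathbb{T},b}|/p_{\mathbb{T},b} \lesssim \tau^{-1}$ to close the argument, and you are right that this does not follow from anything cited in the paper. That is the gap: your route requires a sharp small‑time heat‑kernel estimate that is strictly stronger than what Corollary~\ref{singkernelcorol} provides, and deriving it (e.g.\ via a parabolic Harnack inequality for the Zvonkin‑transformed SDE) would be a nontrivial addition. The $\hat{Y}$ transformation is precisely what lets the paper sidestep this issue entirely, which is why it is the more economical route.

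Two smaller points. First, your handling of the stochastic data $Y + Y^\vee + Y^R$ (giving the $|x|^\alpha$, $\alpha<1/2$, contribution) is consistent with the paper, except the paper must also include $\hat{Y}$, which is more regular and harmless. Second, even after the Li--Yau estimate, your balancing $\tau \sim |x|^{1/2}$ only works when $|x| < t^2$; for $|x| \geq t^2$ you would need a chaining argument (or use the bound at scale $t^2$ and accept a $t$‑dependent constant), which is fine but should be said — whereas the paper gets the cleaner linear bound $|u(t,x)-u(t,0)| \leq C(t)|x|$ in one step.
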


\begin{proof}
  In a first step, we remove the forcing: consider $\hat{Y}$ such that
  \[ \left( \partial_t - \frac{1}{2} \Delta \right) \hat{Y} = \frac{1}{2} |
     \partial_x Y^R |^2 + \partial_x (Y + Y^{\vee} + Y^R) \partial_x  \hat{Y}
     + \frac{1}{2} | \partial_x \hat{Y} |^2, \quad \hat{Y} (0) = 0, \]
  which only depends on $Y$, $Y^{\vee}$ and $Y^R$, but not on $\bar{h}$. Then,
  $u \assign h^R - \hat{Y}$ (see Remark \ref{enhancednoise}) satisfies
  \[ \left( \partial_t - \frac{1}{2} \Delta \right) u = \partial_x (Y +
     Y^{\vee} + Y^R + \hat{Y}) \partial_x u + \frac{1}{2} | \partial_x u |^2,
     \quad h^R (0) = \bar{h} - Y (0), \]
  and we can prove in the same way as in Theorem \ref{varirep} that
  \[ u (t, x) = \underset{v \in \mathfrak{p}\mathfrak{m}}{\sup}
     \underset{\gamma \in \mathfrak{M} (v, x)}{\sup} \mathbb{E} \left[
     \overline{h} (\gamma_t) - Y (0, \gamma_t) - \frac{1}{2} \int^t_0 | v_s
     (\gamma) |^2 \mathd s \right], \]
  where $\gamma$ is now given by
  \begin{equation}
    \gamma_s = x + \int^s_0 v_r \mathd r + \int^s_0 \partial_x (Y + Y^{\vee} +
    Y^R + \hat{Y}) (t - r, \gamma_r) \mathd r + B_s . \label{newsingdiff}
  \end{equation}
  We can proceed as in Theorem \ref{oscitheorem} (as Lemma
  \ref{singdupuistheorem} and Corollary \ref{singkernelcorol} still hold) to
  conclude that
  \[ | u (t, x) - u (t, 0) | \leqslant | \log \tilde{p} (0, x ; t, y) - \log
     \tilde{p} (0, 0 ; t, y) | \leqslant \underset{y \in \mathbb{T}}{\sup}
     \left\| \frac{\partial_x \tilde{p} (0, \cdot ; t, y)}{\tilde{p} (0, \cdot
     ; t, y)} \right\|_{\infty} | x |, \]
  where $\tilde{p}$ denotes the (periodised) transition density of
  \eqref{newsingdiff} with $v = 0$, so overall we have, since
  \[ u = h - Y - Y^{\vee} - Y^R - \hat{Y}, \]
  that
  \[ | h (t, x) - h (t, 0) | \leqslant \left( \| Y + Y^{\vee} + Y^R + \hat{Y}
     \|_{C_T \mathcal{C}^{\alpha}} + C_{\alpha} \underset{y \in
     \mathbb{T}}{\sup} \left\| \frac{\partial_x \tilde{p} (0, \cdot ; t,
     y)}{\tilde{p} (0, \cdot ; t, y)} \right\|_{\infty} \right) | x |^{\alpha}
     . \]
  By working in Besov spaces on the $L^1$ scale and using that the Dirac delta
  is in $B^0_{1, \infty} (\mathbb{T})$, we get $\tilde{p} (0, \cdummy ; t, y)
  \in B^{3 / 2 -}_{1, \infty} (\mathbb{T})$ for all $t > 0$. Going until time
  $t / 2$ and embedding $B^{3 / 2 -}_{1, \infty} (\mathbb{T}) \subset
  B^0_{\infty, \infty} (\mathbb{T})$ and then solving the equation on the
  $L^{\infty}$ scale, we get that $\tilde{p} (0, \cdummy ; t, y) \in C^{3 / 2
  -} (\mathbb{T})$; see also Section 2 of {\cite{nicolaswillem}} for a similar
  argument. By the heat kernel above $\tilde{p} (0, \cdot ; t, y)$ is
  uniformly lower bounded with bound depending on $t$, so overall
  \[ \underset{y \in \mathbb{T}}{\sup} \left\| \frac{\partial_x \tilde{p} (0,
     \cdot ; t, y)}{\tilde{p} (0, \cdot ; t, y)} \right\|_{\infty} = C (t) <
     \infty, \]
  and $C (t)$ is uniformly bounded on $[\varepsilon, T]$ for any $\varepsilon
  > 0$.)
\end{proof}

\begin{remark}
  If $h$ solves the KPZ equation, its Cole-Hopf transform $w = e^h$ solves (in
  a suitable sense; see Lemma 4.7 in {\cite{KPZreloaded}}) the rough heat
  equation
  \[ \left( \partial_t - \frac{\Delta}{2} \right) w = w \xi, \quad w (0) =
     e^{\bar{h}} . \]
  In the spirit of Remark \ref{harnack}, we can thus obtain the Harnack type
  inequality
  \[ \underset{(t, x) \in [\tau, T] \times \mathbb{T}}{\sup} w (t, x)
     \leqslant K (Y, Y^{\vee}, Y^R, \tau, T) \underset{(t, x) \in [\tau, T]
     \times \mathbb{T}}{\inf} w (t, x), \]
  for the rough heat equation on the torus.
\end{remark}

\bibliographystyle{alpha}
	\bibliography{biblio.bib}

\end{document}